\newcounter{probnum}
\definecolor{tabblue}{rgb}{.870588,.905882,.94902}
\definecolor{gray}{rgb}{0.5,0.5,0.5}
\definecolor{black}{rgb}{0,0,0}
\definecolor{white}{rgb}{1,1,1}
\definecolor{blue}{rgb}{0.0,0.0,1}
\definecolor{green}{rgb}{0,0.5,0}
\definecolor{yellow}{rgb}{1,0.549,0}
\definecolor{red}{rgb}{0.6,0.0,0.0}
\definecolor{darkred}{rgb}{0.9,0.4,0}
\definecolor{purple}{rgb}{0.58,0,0.827}
\definecolor{backgcode}{rgb}{0.97,0.97,0.8}
\definecolor{Brown}{cmyk}{0,0.81,1,0.60}
\definecolor{OliveGreen}{cmyk}{0.64,0,0.95,0.40}
\definecolor{CadetBlue}{cmyk}{0.62,0.57,0.23,0}
\newcommand{\qu}[1]{``{#1}''}
\newcommand{\bv}[1]{\boldsymbol{#1}}
\newcommand{\iid}{~{\buildrel iid \over \sim}~}
\newcommand{\half}{\frac{1}{2}}
\newcommand{\x}{\bv{x}}
\renewcommand{\c}{\bv{c}}
\newcommand{\z}{\bv{z}}
\newcommand{\reals}{\mathbb{R}}
\newcommand{\beqn}{\vspace{-0.25cm}\begin{eqnarray*}}
\newcommand{\eeqn}{\end{eqnarray*}}
\newcommand{\bneqn}{\vspace{-0.25cm}\begin{eqnarray}}
\newcommand{\eneqn}{\end{eqnarray}}
\newcommand{\benum}{\begin{enumerate}}
\newcommand{\eenum}{\end{enumerate}}
\newcommand{\parens}[1]{\left(#1\right)}
\newcommand{\squared}[1]{\parens{#1}^2}
\newcommand{\tothepow}[2]{\parens{#1}^{#2}}
\newcommand{\prob}[1]{\mathbb{P}\parens{#1}}
\newcommand{\bracks}[1]{\left[#1\right]}
\newcommand{\braces}[1]{\left\{#1\right\}}
\newcommand{\abss}[1]{\left|#1\right|}
\newcommand{\expe}[1]{\mathbb{E}\bracks{#1}}
\newcommand{\indic}[1]{\mathds{1}_{#1}}
\renewcommand{\exp}[1]{\mathrm{exp}\parens{#1}}
\newcommand{\natlog}[1]{\ln\parens{#1}}
\newcommand{\oneover}[1]{\frac{1}{#1}}
\newcommand{\oneoversqrt}[1]{\oneover{\sqrt{#1}}}
\newcommand{\normnot}[2]{\mathcal{N}\parens{#1,\,#2}}
\newcommand{\stdnormnot}{\normnot{0}{1}}
\newcommand{\exponential}[1]{\mathrm{Exp}\parens{#1}}
\newcommand{\zeroonecl}{\bracks{0,1}}
\newcommand{\convp}{~{\buildrel p \over \longrightarrow}~}
\newcommand{\tendn}{\stackrel{n \to \infty}{\longrightarrow}}
\newcommand{\tendt}{\stackrel{t \to \infty}{\longrightarrow}}
\newcommand{\errorrv}{\mathcal{E}}
\newcommand{\Ymax}{Y_{\text{max}}}
\newcommand{\Ymin}{Y_{\text{min}}}
\newtheorem{theorem}{Theorem}
\newtheorem{corollary}{Corollary}[theorem]
\newtheorem{proposition}{Proposition}
\theoremstyle{remark}
\newtheorem{remark}{Remark}[theorem]
\title{Nearly Random Designs with Greatly Improved Balance}
\author[1]{Abba M. Krieger\thanks{Electronic address: \texttt{krieger@wharton.upenn.edu}; Prinicipal Corresponding author}}
\author[2]{David Azriel\thanks{Electronic address: \texttt{davidazr@ie.technion.ac.il}; Corresponding author}}
\author[3]{Adam Kapelner\thanks{Electronic address: \texttt{kapelner@qc.cuny.edu};  Corresponding author}}
\affil[1]{\small Department of Statistics, The Wharton School of the University of Pennsylvania}
\affil[2]{Faculty of Industrial Engineering and Management, The Technion, Haifa, Israel}
\affil[3]{Department of Mathematics,  Queens College, The City  University of New York}
\begin{document}
\maketitle

\begin{abstract}
We present a new experimental design procedure that divides a set of experimental units into two groups so that the two groups are balanced on a prespecified set of covariates and being almost as random as complete randomization. Under complete randomization, the difference in covariate balance as measured by the standardized average between treatment and control will be $O_p(n^{-1/2})$. If the sample size is not too large this may be material. In this article, we present an algorithm which greedily switches assignment pairs. Resultant designs produce balance of the much lower order $O_p(n^{-3})$ for one covariate. However, our algorithm creates assignments which are, strictly speaking, non-random. We introduce two metrics which capture departures from randomization: one in the style of entropy and one in the style of standard error and demonstrate our assignments are nearly as random as complete randomization in terms of both measures. The results are extended to more than one covariate, simulations are provided to illustrate the results and statistical inference under our design is discussed. We provide an open source \texttt{R} package available on \texttt{CRAN} called \texttt{GreedyExperimentalDesign} which generates designs according to our algorithm.
\end{abstract}
\vspace{5cm}
\pagebreak

\section{Introduction}\label{sec:intro}

Consider $2n$ individuals in a balanced two-arm randomized study (e.g. a pill-placebo double-blind clinical trial) that seeks inference for an additive treatment effect. Each subject is placed into the treatment or control group and this procedure is called an \textit{allocation}, an \textit{assignment}, a \textit{design} or a \textit{randomization} and is specified by an \textit{allocation vector}. Upon completion, the sample responses are used to compute an estimate of the average additive treatment effect.

Consider the case of one measured covariate denoted $X_T$ if the subject is in the treatment group and $X_C$ if the subject is in the control group. Denote the $n$ severity levels (measurement values) for the control as  $X_{T,1}, \ldots, X_{T,n}$ and similarly the severity levels for the treatment, $X_{C,1}, \ldots, X_{C,n}$ which are selected via the allocation vector.

The \emph{covariate balance} is commonly defined by $B=|\frac{\overline{X_T} - \overline{X_C}}{s}|$, where $\overline{X_T}$ and $\overline{X_C}$ are the average severities for the treatment and control respectively and $s$ is the standard deviation of the severity over all $2n$ individuals. 

Balance can be affected by the design of the allocation. Under \emph{complete} and balanced randomization, a random treatment-control allocation is chosen from the $\binom{2n}{n}$ possibilities and $B = O_p(n^{-1/2})$. Due to the slow convergence rate, an unlucky draw in a small experiment can result in a large $B$ particularly if $n$ is small. This problem becomes more pathological when there are many more than only one covariate considered. Thus, while complete randomization is consistent in estimating the treatment effect, it may result in substantial imbalance among covariates, and hence, lower power if $X$ features prominently in the response function \citep{Heckman2008}.

Why not find the optimal allocation, which results in $B = O(\frac{\sqrt{n}}{2^{2n}})$, i.e. a far superior balance? This optimal design, a generally unique solution, can be found for very small $n$ but quickly becomes computationally intractable. Each element in the set of designs must be enumerated and its balance checked; this grows exponentially in the sample size and linearly in the number of covariates.

Designing allocations which provide \qu{better} balance on critical covariates than complete randomization (albeit suboptimal) and how one defines \qu{better} is an area of interest dating back to the conception of randomized experiments. Thus, several allocation procedures have been suggested in the literature and are in use e.g. randomized block designs \citep{Fisher1949} and pairwise matching \citep{Greevy2004} yielding $B=O_p(n^{-1})$, rerandomization of the allocation vector $R$ times \citep{Morgan2012} with $B=O_p(n^{-R\sqrt{n}})$. In the abstract, minimizing $B$ can be viewed not as a statistical problem, but as a pure optimization problem. \citet{Bertsimas2015} uses discrete linear optimization to approximate the optimal solution. If there is one covariate, the problem is known as the \qu{number-partitioning problem}. One solution to the problem of note is \citet{Karmarkar1982} who consider a differing algorithm. Their approach was then investigated by \citet{Yakir1996} who demonstrated their algorithm to provide $B = O(n^{-\natlog{n}})$ assuming $X$ has an exponential distribution.

There are also approaches that address balance in the case of sequential experiments (subjects are iteratively randomized) that could, in theory, be implemented here \citep[e.g.][]{Efron1971, Pocock1975, Kapelner2014}. However, we limit our scope in this work to a design applicable only to the non-sequential case (i.e. where all subjects are randomized simultaneously).

Which design should be used in practice? To answer this question, we must first ask, \qu{why is balance important}? By and large, reductions in balance lower the variance of the treatment effect estimator.\footnote{If the covariates are distributed as a multivariate normal, \citet[Theorem 3.2]{Morgan2012} provide an expression for this variance reduction explicitly making no assumptions about the response model. A perfect balance ($B=0$) will reduce the variance by $1-R^2$ where $R^2$ is the proportion of variance explained by a regression of the response on the covariates.},\footnote{A corollary question is, \qu{how should we define balance}? We selected one popular definition of $B$ known to perform well and we explore this topic more in Section~\ref{subsec:choice}.} However, \citet{Kallus2016} builds in an over-arching framework synthesizing experimental design procedures and proves a \qu{no free lunch} style theorem. Given no information about the model (i.e. specification of the functional relationship of the covariates to response), no specific allocation strategy is optimal and hence, one should opt for allocating via complete randomization. He further identifies allocations which provide optimal balance under different modeling assumptions. Surprisingly, many of the non-sequential classic allocation strategies currently in-use (mentioned above) are vindicated as optimal strategies. However, any approach may be arbitrarily suboptimal when the model is misspecified.

In this work, just like the historical procedures, we consider a design strategy that trades the ideal of complete randomization for better balance. We suggest an algorithm that achieves very small balance on the one hand but is also \qu{nearly} as random as complete randomization which should serve as a hedged bet when there is no free lunch. We call our design algorithm \textit{greedy pair-switching} and it proceeds as follows. 

We initialize the allocation vector using complete randomization. We then consider all possible $n^2$ switches of a subject assigned to the treatment group with a subject assigned to the control group. We choose the switch \emph{greedily} to minimize $B$, albeit locally. We repeat these switches until there is no further reduction in $B$. This simple algorithm has some very desirable features. 

First, with one covariate, it can be shown that $B=O_p(n^{-3})$ and with $p$ covariates, $B=O_p(n^{-(1 + 2/p)})$. What is more remarkable is that the algorithm makes very few switches before it stops so that the result is nearly the same as complete randomization. Hence, our designs are robust in the face of nature providing an adversarial model \citep[e.g.][Example 2.1]{Kallus2016} as we show in Section~\ref{sec:consistency}. Further, our algorithm precludes the possibility of the experimenter knowing the assignment of an individual subject, a practicality which must be considered in other allocation procedures \citep[e.g.][]{Efron1971}. Additionally, our procedure can be repeated many times which serves two purposes: (1) to further reduce the balance by taking the minimum of many replicates and (2) to provide many designs which can be used within a permutation test. The latter can test the sharp null hypothesis of no additive treatment effect and can subsequently be inverted to provide a confidence interval of the treatment effect.

The paper proceeds as follows. Section~\ref{sec:alg} analyzes the order of $B$ in our algorithm for the special case of one covariate and then we naturally extend the analysis to $p > 1$ covariates. To measure our procedure's departure from complete randomization, we define two metrics of allocation randomness in Section~\ref{sec:randomness}. We provide simulation results in Section~\ref{sec:sims} to elucidate the theoretical results. Statistical inference of the additive treatment effect in experiments allocated via our design is treated briefly in Section~\ref{sec:inference}. The paper concludes in Section~\ref{sec:conclusion} with a brief discussion and offers future directions. 

\section{Greedy Pair-Switching Algorithm Analysis}\label{sec:alg}

\subsection{The One Covariate Case}\label{sec:one_cov}

We consider the case of one covariate where all measurements are known before assignment. We will create an assignment of $n$ individuals to treatment and the remaining $n$ individuals to control while (a) improving $B$ for one covariate and (b) retaining the desirable property of being nearly random. 

The optimal design that produces the minimum balance may be found by solving an integer programming problem with linear constraints. Hence, the solution $B_\text{min}$ is generally unique. The assignment that maximizes the balance is achieved by giving the $n$ individuals with the largest severity the treatment and the $n$ individuals with lowest severity the control; this is hardly a desirable solution. The balance produced, $B_\text{max}$, will converge to a finite number (if the mean exists) proportional to  $\int_{1/2}^1 F^{-1}(u)du$ where $F$ is the distribution function of severity in the population. 

And so the balance ranges in $\bracks{B_\text{min}, B_\text{max}}$ over all possible divisions of the $2n$ patients into equal groups. There are then an exponential number of solutions, $\binom{2n}{n}$, which is of order $\frac{2^{2n}}{\sqrt{n}}$ in this range. This argument suggests that the computationally intractable optimal solution will in general have balance that is of order $\frac{\sqrt{n}}{2^{2n}}$ if the distribution of severity is absolutely continuous.

We use this reasoning to derive the order of the balance under our proposed \textit{greedy pair-switching}. Since there are an exponential number of solutions all of which have finite balance (and only one solution that is minimal), there must be many solutions that are sufficiently small for practical purposes, say of order $n^{-k}$ for some $k$. The greedy algorithm presented in the introduction finds such a division.  We prove below that with one covariate the resulting balance using the greedy algorithm is of order $\frac{1}{n^3}$ and there are many divisions with balance of this order. The greedy algorithm converges to dissimilar solutions depending on the initial randomized allocation vector. Hence our algorithm is also \textit{nearly} random, as will be formalized in Sections~\ref{sec:randomness} and \ref{sec:sims}. 

The reason the algorithm is nearly random is because it almost surely terminates after very few switches.  The intuition is that after relatively few switches the  $\sum_{i=1}^n X_{T,i} - \sum_{i=1}^n X_{C,i}$ goes from order $\sqrt{n}$ to a constant. Once  the difference of the sums is a constant, $c$ , then all we need is a pair $(X_i,Y_j)$ to be within $c+\frac{d}{n^2}$ as the difference of the sums being of order $\frac{1}{n^2}$ implies that the averages differ by order $\frac{1}{n^3}$. The argument below shows that such a pair exists with high probability, particularly if $d$ is sufficiently large.

More formally, let $X_T$ and $X_c$ be independent and identically distributed absolutely continuous random variables with uniformally continuous density function $f$ and distribution function $F$. Let

\bneqn\label{eq:Ace}
A(c,\epsilon)=\braces{(x_T, x_C) : c - \epsilon \leq \abss{x_T - x_C} \leq c + \epsilon}
\eneqn

\noindent and 

\bneqn\label{eq:Pce}
P(c,\epsilon) = \prob{(X_T, X_C) \in A(c,\epsilon)}.
\eneqn

\noindent We can then define $P(c)$ as $\epsilon$ vanishes, 

\begin{eqnarray}\label{eq:p_c}
P(c) &=& \lim _{\epsilon \rightarrow  0} \frac{P(c,\epsilon)}{2\epsilon} \nonumber \\  
&=&  \lim _{\epsilon \rightarrow  0} \frac{\int_x F(x+c+\epsilon)-F(x+c-\epsilon)}{2\epsilon}f(x)dx \nonumber \\ 
&=& \int_\reals f(x+c)f(x)dx.
\end{eqnarray}

It is an elementary exercise to verify that if $X \sim U(0,1)$ then $P(c)=1-c$, if $X \sim \exponential{1}$ then $P(c)=\frac{1}{2}e^{-c}$ and if $X \sim \stdnormnot$ then $P(c)=\half\oneoversqrt{\pi}e^{-c^2/4}$.

Now consider $X_{T,1}, \ldots, X_{T,n}, X_{C,1}, \ldots, X_{C,n} \iid f$. Let $B_{i,j}$ be the event that $(X_{T,i},X_{C,j}) \in A(c,\epsilon)$ Note that there are $n^2$ events $B_{i,j}$. If these events were independent and sufficiently small $\epsilon$, say $\epsilon=\frac{d}{n^2}$, then the number of such events that will occur is asymptotically distributed as a Poisson($\lambda = 2dP(c)$).

But the issue is that since there are $n^2$ comparisons, but only $2n$ random variables, these $B_{i,j}$ are dependent. However, we do not need to know the number of these events that will occur, but rather only whether there is any such event. If there is such an event, then we can switch these two observations and move the sum of the differences in the severity from a constant $c$ to something of order $n^{-2}$. We present this formally below. \\

\begin{theorem}
\label{thm:main}
For any value of $c$, and any probability $\gamma$, there exists an $N$ such that for any $n>N$, the probability that there exists a pair $(X_{T,i}, X_{C,j})$ that satisfies $\prob{(X_{T,i},X_{C,j}) \in A(c,\epsilon)}>\gamma$ where $\epsilon= \frac{1}{2n^2P(c)(1-\gamma)}$.
\end{theorem}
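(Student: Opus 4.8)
The plan is to recast the statement as a claim about a count and then apply the second moment method, which lets me sidestep the dependence among the $B_{i,j}$ flagged in the discussion after \eqref{eq:Pce}. Fix $\epsilon = \frac{1}{2n^2 P(c)(1-\gamma)}$ (so $P(c)>0$ is implicitly required for $\epsilon$ to be defined) and let $S := \sum_{i,j=1}^n \indic{B_{i,j}}$ count the pairs with $(X_{T,i},X_{C,j}) \in A(c,\epsilon)$. The event ``there exists such a pair'' is exactly $\braces{S \geq 1}$, so it suffices to show $\prob{S \geq 1} > \gamma$ for all large $n$. The tool is the elementary inequality $\prob{S \geq 1} \geq (\expe{S})^2/\expe{S^2}$, which follows from Cauchy--Schwarz applied to $\expe{S} = \expe{S\,\indic{S \geq 1}}$; this requires only the first two moments of $S$.

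First moment. By identical distribution, $\expe{S} = n^2 P(c,\epsilon)$. Since $\epsilon = \epsilon_n \to 0$ as $n \to \infty$, the limit in \eqref{eq:p_c} gives $P(c,\epsilon)/(2\epsilon) \to P(c)$, whence
\[
\expe{S} = n^2 \cdot 2\epsilon P(c)\,(1 + o(1)) = \frac{1}{1-\gamma}\,(1+o(1)) \longrightarrow \lambda := \frac{1}{1-\gamma}.
\]
This recovers the Poisson mean anticipated in the text (with $d = \frac{1}{2P(c)(1-\gamma)}$, so $\lambda = 2dP(c)$).

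Second moment. I would expand $\expe{S^2} = \sum_{i,j}\sum_{k,l} \prob{B_{i,j} \cap B_{k,l}}$ and split by index overlap. The $n^2(n-1)^2$ terms with $i \neq k$ and $j \neq l$ involve four distinct observations, hence are independent and contribute $n^2(n-1)^2 P(c,\epsilon)^2 \to \lambda^2$. The $n^2$ diagonal terms contribute $\expe{S} \to \lambda$. The remaining $O(n^3)$ terms share exactly one index (say $i=k$, $j \neq l$); conditioning on the shared variable $X_{T,i}=x$ makes $B_{i,j}$ and $B_{i,l}$ conditionally independent, so each equals $\expe{g(X_{T,i})^2}$ with $g(x) := \prob{(x,X_C) \in A(c,\epsilon)}$. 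Uniform continuity together with integrability makes $f$ bounded, so $g(x) \le 4\epsilon\,\|f\|_\infty$ uniformly and each such term is $O(\epsilon^2)$; the whole shared-index block is $O(n^3\epsilon^2) = O(n^{-1}) \to 0$. Collecting, $\expe{S^2} \to \lambda + \lambda^2$, i.e. $\var{S} \to \lambda$, exactly as for a $\poisson{\lambda}$ limit.

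Conclusion. Combining the two moments,
\[
\prob{S \geq 1} \;\geq\; \frac{(\expe{S})^2}{\expe{S^2}} \;\longrightarrow\; \frac{\lambda^2}{\lambda + \lambda^2} = \frac{\lambda}{1+\lambda} = \frac{1}{2-\gamma}.
\]
The inequality $\frac{1}{2-\gamma} > \gamma$ is equivalent to $(1-\gamma)^2 > 0$, which holds for every $\gamma \in [0,1)$; because this limiting bound exceeds $\gamma$ by a strict gap, the ratio itself exceeds $\gamma$ for all $n$ beyond some $N$, which is the asserted $N$. I expect the main obstacle to be the second-moment bookkeeping, specifically controlling the shared-index covariances uniformly in $x$, where the uniform continuity of $f$ (boundedness and the clean rate $P(c,\epsilon) \sim 2\epsilon P(c)$) is what is really used. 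The Poisson heuristic in the text would sharpen the limiting bound to $1 - e^{-\lambda}$, but the cruder second-moment estimate already clears $\gamma$ because of this strict gap, so the full Poisson convergence need not be established.
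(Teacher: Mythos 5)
Your proof is correct and is essentially the paper's argument: the paper lower-bounds $\prob{\bigcup_{i,j} B_{i,j}}$ via de Caen's inequality, which in this fully symmetric setting (all $\prob{B_{i,j}}$ equal, all row sums $\sum_{k,l}\prob{B_{i,j}\cap B_{k,l}}$ equal) reduces to exactly your Cauchy--Schwarz bound $(\expe{S})^2/\expe{S^2}$, and it performs the same four-way split by index overlap to arrive at the same limiting bound $P^2(c)/\parens{P^2(c)+P(c)/(2d)} = 1/(2-\gamma) > \gamma$. The only substantive difference is cosmetic: the paper evaluates the shared-index terms exactly via the limiting conditional densities $P^+(c)$ and $P^-(c)$, whereas you dispose of them with a uniform $O(\epsilon^2)$ bound using the boundedness of $f$ --- either way those $O(n^3)$ terms contribute $O(n^{-1})$ and vanish in the limit.
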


\begin{proof}
\noindent To prove this, we make use of the following result by \citet{decaen1997}, 
\begin{equation}\label{bound}
\prob{\bigcup_{i,j}B_{i,j}} \ge \sum_{i,j}\frac{\prob{B_{i,j}}^2}{\sum_{k,l}\prob{B_{i,j} \cap B_{k,l}}}.
\end{equation}
Note that each of the $n^2$ terms in the sum on the right hand side of (\ref{bound}), has the same value, which we now evaluate. To this end, consider 
four cases for $\prob{B_{i,j} \cap B_{k,l}}$: 
\begin{itemize}
\item If $i \ne k$ and $j \ne l$ then the events are independent. Hence

\beqn
\lim_{\epsilon \rightarrow 0}\frac{\prob{B_{i,j} \cap B_{k,l}}}{4\epsilon^2} = P^2(c).
\eeqn

\item If $i=k$ and $j=l$ then $\lim_{\epsilon \rightarrow 0}\frac{\prob{B_{i,j}}}{2\epsilon}=P(c)$. 
\item If $i=k$ and $j \ne l$ then $\prob{B_{i,j} \cap B_{i,l}}=\prob{B_{i,j}}\prob{B_{i,l}|B_{i,j}}$.

\noindent Consider $\prob{B_{i,l}|B_{i,j}}$. What effects $B_{i,l}$ is the density function of $X_i$ given $B_{i,j}$. But 

\beqn
\prob{X_i \le x~|~B_{i,j}}= \frac{\displaystyle \int_{-\infty}^x (F(z+c+\epsilon)-F(z+c-\epsilon))f(z)dz}{\displaystyle\int_\reals (F(z+c+\epsilon)-F(z+c-\epsilon))f(z)dz}.
\eeqn

\noindent Hence,

\begin{equation}\label{density}
\lim_{\epsilon \rightarrow 0} f_{X_i|B_{i,j}}(x)= \frac{f(x+c)f(x)}{\displaystyle \int_\reals f(z+c)f(z)dz}.
\end{equation}

\noindent In order to evaluate $P^+(c) \equiv \lim_{\epsilon \rightarrow 0}\frac{\prob{B_{i,l}~|~B_{i,j}}}{2\epsilon}$, we first calculate

\beqn
\prob{(X_{T,i},X_{C,l}) \in A(c,\epsilon)~|~X_{T,i} = x},
\eeqn 

\noindent then weight these by the density of $X_i$ given in (\ref{density}) and divide by $2\epsilon$. This yields 

\begin{equation}\label{condp}
P^+(c)=\frac{\int_\reals f^2(x+c) f(x) dx}{P(c)}.
\end{equation}

\item If $i \ne k$ and $j=l$ we can follow a similar argument to the previous case to yield

\begin{equation}\label{condn}
P^-(c) \equiv \lim_{\epsilon \rightarrow 0}\frac{\prob{B_{i,j}|B_{k,j}}}{2\epsilon}= \frac{\int_\reals f^2(x-c)f(x)dx}{P(c)}.
\end{equation}
\end{itemize}

From (\ref{bound}) after dividing the numerator and denominator by $4n^2 \epsilon^2$ this yields

\beqn
\lim_{n \rightarrow \infty} \prob{\bigcup_{i,j}B_{i,j}} \ge \lim_{n \rightarrow \infty}\frac{P^2(c)}{\frac{(n-1)^2}{n^2}P^2(c)+\frac{n-1}{n^2}P(c)(P^+(c)+P^-(c))+\frac{1}{2n^2\epsilon}P(c)}.
\eeqn

Letting $\epsilon=\frac{d}{n^2}$, the limit of the denominator in the above expression is $P^2(c)+\frac{P(c)}{2d}$. If we set $\gamma=\frac{P^2(c)}{P^2(c)+\frac{P(c)}{2d}}$, then $d=\frac{\gamma}{2(1-\gamma)P(c)}$. The result follows by making $d$ a bit larger by ignoring $\gamma$ in the numerator.
\end{proof}

~

\begin{remark} 
It is easy to see that $P^+(c) \ge P(c)$ as $\int f^2(x+c)f(x)dx > (\int f(x+c)f(x)dx)^2$. Similarly, $P^-(c) \ge P(c)$. This is intuitive because if $(X_{T,i},X_{C,k}) \in A(c,\epsilon)$ this implies that the two random variables are more likely to be in the fatter part of the distribution, and as a result, it is more likely that another $X_C$ is close to $X_{T,i}$ and another $X_T$ is close to $X_{C,k}$. For the three special cases considered above, $P^+(c)= P^-(c)=1$ if the distributions are uniform; $P^+(c)=\frac{2}{3}e^{-c}$ and $P^-(c)=\frac{2}{3}$ if the distributions are exponential with mean of $1$; $P^+(c)=P^-(c)=\frac{1}{\sqrt{3\pi}}e^{-\frac{1}{12}c^2}$ if the distributions are standard normal.
\end{remark}

~ 

\begin{remark}\label{rem:thm1b}
Theorem~\ref{thm:main} still holds true where instead of considering all $n^2$ events $B_{i,j}$, we consider only pairs of a group of size $n^2-n$. This is the case since now the right hand side of (\ref{bound}), will include $n^2-n$ summands but these asymptotically behave like $n^2$ summands. 
\end{remark}

Theorem~\ref{thm:main} suggessts two corrolaries:\\

\begin{corollary}
The number of switches that will be required is of order no greater than $\sqrt{n}$.
\end{corollary}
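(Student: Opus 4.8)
The plan is to track the single scalar $D := \sum_{i=1}^n X_{T,i} - \sum_{i=1}^n X_{C,i}$. Since $B = |D|/(ns)$ with $s$ converging to a positive constant, the greedy step (which minimizes $B$) is exactly the step that minimizes $|D|$. A switch of the pair $(X_{T,i}, X_{C,j})$ moves $X_{T,i}$ into the control sum and $X_{C,j}$ into the treatment sum, so it changes $D$ by precisely $2(X_{C,j} - X_{T,i})$; greedy chooses the switch making $|D|$ smallest, and in particular $|D|$ is nonincreasing along the run. First I would pin down the two endpoints: by the CLT the initial difference is $|D_0| = O_p(\sqrt{n})$, and the terminal difference is of order $n^{-2}$ (this is what produces $B = O_p(n^{-3})$). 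The total decrease in $|D|$ over the entire run is thus $O_p(\sqrt{n})$, so it suffices to show that a positive constant amount of this budget is consumed at all but a negligible number of switches.

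The core is a lower bound on the per-switch decrease during the bulk phase, i.e. while $|D_t| > 2R$, where $R$ is the range of the $2n$ observations (which is $O_p(1)$ for bounded support, and $o(\sqrt{n})$ with a $\Theta(1)$ lower bound in the light-tailed cases of interest). When $D_t > 2R$ no achievable gap $X_{T,i} - X_{C,j} \le R$ can reach $D_t/2$, so the $|D|$-minimizing switch is simply to exchange the current largest treatment value with the current smallest control value, decreasing $|D|$ by $2(\max_i X_{T,i} - \min_j X_{C,j})$. After $t$ such switches the largest value remaining in treatment and the smallest remaining in control are, up to relabeling, governed by the $t$-th extreme order statistics of the pooled sample, and I would invoke a standard order-statistic estimate to show that for all $t = o(n)$ this gap stays bounded below by a fixed constant $\delta > 0$ with probability tending to one. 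Hence every bulk switch removes at least $2\delta$ from $|D|$.

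Combining the two pieces closes the argument. Because each bulk switch removes at least $2\delta$ while the total removable budget is $|D_0| = O_p(\sqrt{n})$, the number of bulk switches is at most $|D_0|/(2\delta) = O_p(\sqrt{n})$; this count is $o(n)$, which is exactly the regime in which the constant-gap estimate is valid, so the argument is self-consistent. One can make this rigorous by applying the estimate to the first $C\sqrt{n}$ switches and choosing $C$ large enough that more than $C\sqrt{n}$ switches would force $|D|$ below zero, a contradiction. Once $|D_t|$ first falls to $O_p(1)$ we enter the final phase, and Theorem~\ref{thm:main} supplies, with probability approaching one, a single pair in $A(c,\epsilon)$ with $\epsilon = \Theta(n^{-2})$ whose switch drives $|D|$ down to order $n^{-2}$; only $O(1)$ further switches are then needed. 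Adding the two phases yields a total of $O_p(\sqrt{n})$ switches.

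The step I expect to be the main obstacle is the uniform-in-$t$ lower bound on the available gap throughout the bulk phase. The difficulty is that the partition into treatment and control evolves as switches are made, so I cannot merely cite a property of one fixed configuration; I must show that the largest decrease available to the greedy rule stays $\Theta(1)$ simultaneously over every configuration the algorithm can reach in its first $o(n)$ steps. Reducing this cleanly to a partition-independent statement about the extreme order statistics of the pooled sample is the crux, and it is also where the continuity and nondegeneracy of $f$ enter.
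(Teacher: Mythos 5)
Your proposal follows essentially the same two-phase argument as the paper's proof: the initial difference of sums is $O_p(\sqrt{n})$ by the CLT, each early greedy switch (exchanging the extreme treatment and control values) removes a constant amount so that $O_p(\sqrt{n})$ switches suffice to bring the difference down to $O(1)$, after which Theorem~\ref{thm:main} supplies one further switch down to $O(n^{-2})$. Your write-up is in fact more detailed than the paper's one-line justification, and the obstacle you flag --- the uniform-in-$t$ lower bound on the available gap as the partition evolves --- is precisely the step the paper's proof asserts without argument.
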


\begin{proof}
This is immediate as after at most an amount proportional to $\sqrt{n}$ switches, the difference in the sums will be reduced to an amount of  $O(1)$. One more switch brings it to an amount of $O(n^{-2})$ by the above theorem and Remark~\ref{rem:thm1b}. Finally there are only a finite number of pairs of $(X_{T,i},X_{C,j})$ that satisfy $|X_{T,i} - X_{C,j}| \le \frac{k}{n^2}$ for any $k$. 
\end{proof}

~

\begin{corollary}
If the covariate has finite variance the greedy algorithm produces a balance, $B$  that is $O_p(n^{-3})$.
\end{corollary}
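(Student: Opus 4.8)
The plan is to reduce the statement to two independent pieces: the order of the difference of group sums $D_n := \sum_{i=1}^n X_{T,i} - \sum_{i=1}^n X_{C,i}$ at the moment the greedy algorithm terminates, and the behavior of the denominator $s$. Writing the balance as $B = |\overline{X_T} - \overline{X_C}|/s = |D_n|/(n s)$ and noting that a single pair-switch of $(X_{T,i}, X_{C,j})$ replaces $D_n$ by $D_n - 2(X_{T,i} - X_{C,j})$, it suffices to show that the terminal $|D_n|$ is $O_p(n^{-2})$ and that $s$ converges to a positive constant, since then $B$ carries an extra factor $n^{-1}$ from the averaging.

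First I would invoke the preceding corollary on the number of switches: from a completely randomized start, where $D_n = O_p(\sqrt{n})$, the greedy algorithm reaches after $O(\sqrt{n})$ switches a state in which $|D_n|$ has been driven down to $O(1)$, i.e.\ to some random constant $c = O_p(1)$. Conditioning on the realized $c$, I would apply Theorem~\ref{thm:main}, together with Remark~\ref{rem:thm1b} (one pair has just been used, leaving $n^2 - n$ candidates), to the target $c/2$: with probability at least $\gamma$ there is a surviving pair $(X_{T,i}, X_{C,j})$ with $|X_{T,i} - X_{C,j}|$ within $\epsilon = O(n^{-2})$ of $c/2$, and switching it forces $|D_n| \le 2\epsilon = O(n^{-2})$. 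Because the greedy rule selects the balance-minimizing switch, its iterate after this step is at least this good, and since every later step can only decrease $|D_n|$ further, the terminal difference of sums is $O_p(n^{-2})$.

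For the denominator, the finite-variance hypothesis enters exactly here: by the weak law of large numbers applied to the $2n$ pooled covariate values, $s \convp \sigma$ with $\sigma$ the finite, positive population standard deviation, so $1/s = O_p(1)$. Combining, $B = |D_n|/(n s)$ is $O_p(n^{-2}) \cdot n^{-1} \cdot O_p(1) = O_p(n^{-3})$, as claimed.

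The hard part will be promoting the pointwise-in-$c$ guarantee of Theorem~\ref{thm:main} to a uniform $O_p$ bound, since the constant $c$ on which the greedy algorithm settles is itself random and $P(c)$ (which controls $\epsilon$) degrades as $|c|$ grows. I would handle this with a two-event argument: because $c = O_p(1)$, for any $\delta > 0$ there is a compact interval $[-C_\delta, C_\delta]$ trapping $c$ with probability $1-\delta$, and on that interval $P(c) = \int_\reals f(x+c)f(x)\,dx$ is bounded below by some $p_0 > 0$, so the constant hidden in $\epsilon = O(n^{-2})$ can be taken uniform over the interval; intersecting this event with the existence event from the theorem delivers the required control. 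A minor point to verify is the orientation in $A(c,\epsilon)$ — one needs $X_{T,i} - X_{C,j}$ close to $c/2$ with the sign that pushes $D_n$ toward zero — but this affects only constants, not the $n^{-3}$ rate.
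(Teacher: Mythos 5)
Your proposal follows essentially the same route as the paper's proof: drive the difference of sums down to an $O_p(1)$ constant via the initial greedy switches (the preceding corollary), then invoke Theorem~\ref{thm:main} together with Remark~\ref{rem:thm1b} to find one final pair whose switch reduces the difference to $O_p(n^{-2})$, and divide by $n$ (and by $s$) to obtain $O_p(n^{-3})$. The additional care you take---the factor of $2$ forcing the target $c/2$, the tightness argument making the pointwise-in-$c$ guarantee uniform over the random terminal constant, and the role of finite variance in controlling the denominator $s$---correctly fills in steps the paper leaves implicit rather than constituting a different argument.
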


\begin{proof}
Since the difference in the sum is asymptotically normally distributed with mean of $0$ and standard deviation of $\sqrt{2n}$ the first steps of the algorithm will be to switch the largest value of the subset with smaller sum for the smallest value of the subset with larger sum. Eventually the balance will be some constant $c$. Theorem~\ref{thm:main} and Remark~\ref{rem:thm1b} now imply that there will be one possible switch that can reduce the difference of the sum from a constant to a value of order $n^{-2}$, hence the balance, $B$ will be of order $n^{-3}$.
\end{proof}

\subsection{The Multiple Covariate Case}\label{sec:mult_cov}

This section mirrors the previous section except that there are now $p$ covariates rather than one covariate. The primary result in the previous section was that the greedy pair-switching heuristic produces a solution resulting in $B = O_p(n^{-3})$ when there is one covariate. We now consider random assignments of $2n$ individuals to treatment and control to balance $p$ covariates. Assume that the random assignment results in covariates for the treatment that  are denoted by  $X_{T,i,j}$ and similarly for the control of $X_{C,i,j}$ where $i=1,\ldots,n$ indexing the subjects and $j=1, \ldots,p$ indexing the covariate measurements. We assume that these random vectors $\bv{X}_{T,i}$ and $\bv{X}_{C,i}$ are independent across the $2n$ individuals. Once we observe the covariates for the $2n$ individuals we can standardize each covariate without loss of generality so that $\sum_{i=1}^n x_{T,i,j} + x_{C,i,j} = 0$ and $\sum _{i=1}^n x_{T,i,j}^2 +x_{C,i,j}^2=2n-1$ for each $j=1,\ldots,p$. The standardized difference-in-means balance metric can then be alternatively expressed as

\[
B= \frac{2}{n}\sum_{j=1}^p \abss{\sum_{i=1}^n x_{T,i,j}}.
\]

The result in the previous section hinged on the fact that with high probability there will be an individual in the  treatment group and one in the  control group  that differ on the covariate by an amount $c \pm \epsilon$ where $\epsilon = O(n^{-2})$. Now when we switch a treated subject for a control subject, all $p$ covariates are of issue, so being close on one covariate will not necessarily guarantee being close on other covariates. The primary observation is that we can find a pair of individuals who differ by an amount $c_j \pm \epsilon$ for all $j=1,\ldots,p$, but now $\epsilon = O_p(n^{-2 / p})$ so that $\epsilon^p = O_p(n^{-2})$.

Generalizing definition~\ref{eq:Ace} to $p$ covariates, we let

\beqn
A(\c,\epsilon)=\braces{(\bv{x}_T, \bv{x}_C) : \forall j~c_j - \epsilon \leq \abss{x_{T,j} - x_{C,j}} \leq c_j + \epsilon}
\eeqn

and $P(\c,\epsilon)$ is as in definition~\ref{eq:Pce} now with vectors $\bv{X}_{T}$, $\bv{X}_{C}$ and $\c = \bracks{c_1, \ldots, c_p}$. It follows that 

\beqn
\lim_{\epsilon \rightarrow 0}\frac{P(\c,\epsilon)}{(2\epsilon)^p} =
\lim_{\epsilon \rightarrow 0}\int\displaylimits_{\x \in \reals^p} \frac{F(\x+\c+\epsilon) - F(\x+\c-\epsilon)}{(2\epsilon)^p}f(\x)d\x =
\int\displaylimits_{\x \in \reals^p}f(\x+\c)f(\x)d\x.
\eeqn

The key is that the integral on the right-hand side is a constant, $\prob{\c}>0$.

~

\begin{theorem}
Let $B_{i,j}$ be the event that $(\bv{X}_{T,i}, \bv{X}_{C,j}) \in A(\c,\epsilon)$. For any $0 < \gamma < 1$, there exists an $N$ such that $\forall n>N$ the probability that at least one of the $n^2$ $B_{i,j}$ occurs exceeds $\gamma$,
where $\epsilon=\half \tothepow{\frac{1}{n^2 \prob{\c} (1-\gamma)}}{1/p}$.
\end{theorem}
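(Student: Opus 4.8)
The plan is to imitate the proof of Theorem~\ref{thm:main} essentially line for line, replacing the one-dimensional normalizer $2\epsilon$ by its $p$-dimensional analog $(2\epsilon)^p$ wherever it appears. First I would invoke the de Caen inequality (\ref{bound}) for the $n^2$ events $B_{i,j}$. By exchangeability of the $2n$ i.i.d.\ vectors, every summand on the right-hand side is identical, so it suffices to compute a single representative term $\prob{B_{i,j}}^2 / \sum_{k,l}\prob{B_{i,j}\cap B_{k,l}}$ and multiply by $n^2$.

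Next I would evaluate $\prob{B_{i,j}\cap B_{k,l}}$ in the same four cases as before, now normalized by $(2\epsilon)^{2p}$ rather than $4\epsilon^2$. When $i\ne k$ and $j\ne l$ the two events involve four distinct vectors and are independent, giving $\lim_{\epsilon\to 0}\prob{B_{i,j}\cap B_{k,l}}/(2\epsilon)^{2p}=\prob{\c}^2$. The diagonal case $i=k,\,j=l$ gives $\lim_{\epsilon\to 0}\prob{B_{i,j}}/(2\epsilon)^p=\prob{\c}$. The two shared-index cases $i=k,\,j\ne l$ and $i\ne k,\,j=l$ produce finite constants $P^+(\c)$ and $P^-(\c)$, obtained exactly as in (\ref{condp}) and (\ref{condn}) except that the conditional density of $\bv{X}_{T,i}$ given $B_{i,j}$ is now the $p$-dimensional object $f(\x+\c)f(\x)/\prob{\c}$ integrated over $\reals^p$. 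This $p$-dimensional conditional-density computation is the only genuinely new bookkeeping; the qualitative fact that actually matters is merely that $P^+(\c)$ and $P^-(\c)$ are \emph{finite}.

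I would then collect the denominator as $(n-1)^2$ independent terms, one diagonal term, and $(n-1)$ terms of each shared-index type, divide numerator and denominator of the de Caen bound by $n^2(2\epsilon)^{2p}$, and send $n\to\infty$. The independent terms carry the coefficient $\frac{(n-1)^2}{n^2}\to 1$; the shared-index terms carry $\frac{n-1}{n^2}\to 0$, so $P^+(\c)$ and $P^-(\c)$ drop out entirely (this is precisely why only their finiteness is required); and the diagonal term becomes $\prob{\c}/\parens{n^2(2\epsilon)^p}$. Writing $(2\epsilon)^p=d/n^2$, the denominator limit is exactly $\prob{\c}^2+\prob{\c}/d$, so the de Caen bound tends to $\frac{\prob{\c}^2}{\prob{\c}^2+\prob{\c}/d}$.

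Finally I would tune $d$ just as in Theorem~\ref{thm:main}: setting $\gamma=\frac{\prob{\c}^2}{\prob{\c}^2+\prob{\c}/d}$ and solving gives $d=\frac{\gamma}{\prob{\c}(1-\gamma)}$, and enlarging $d$ by dropping the $\gamma$ in the numerator yields $d=\frac{1}{\prob{\c}(1-\gamma)}$, i.e.\ $(2\epsilon)^p=\frac{1}{n^2\prob{\c}(1-\gamma)}$, which is precisely the stated $\epsilon=\half\tothepow{\frac{1}{n^2\prob{\c}(1-\gamma)}}{1/p}$. Because this enlargement makes the limiting bound strictly exceed $\gamma$, the definition of the limit supplies the required threshold $N$ beyond which the finite-$n$ bound exceeds $\gamma$ for every $n>N$. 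The main obstacle, such as it is, is purely clerical: carrying the $(2\epsilon)^p$ normalization consistently through the four cases and confirming the $p$-dimensional $P^{\pm}(\c)$ integrals converge. No new probabilistic idea beyond the one-covariate argument is needed.
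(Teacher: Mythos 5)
Your proposal is correct and takes essentially the same route as the paper's own proof: the de Caen bound over the $n^2$ events, the same four-case evaluation of $\prob{B_{i,j}\cap B_{k,l}}$ carried through with the $(2\epsilon)^p$ normalization, the observation that the shared-index terms vanish under the $\frac{n-1}{n^2}$ coefficient so only finiteness of the conditional constants matters, and the identical tuning of $d$ to reach the stated $\epsilon$. If anything, you are more explicit than the paper, which compresses the final computation into \qu{the result follows by straightforward algebra.}
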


\begin{proof}

We parallel the first step in Theorem~\ref{thm:main} to obtain

\footnotesize
\[
\prob{\bigcup_{i,j}B_{i,j}} \ge \frac{n^2\prob{B_{1,1}}^2}{(n-1)^2\prob{B_{1,1}}^2 +(n-1)\prob{B_{1,1}}\prob{B_{1,2}\mid B_{1,1}}+(n-1)\prob{B_{1,1}}\prob{B_{2,1}\mid B_{1,1}}+\prob{B_{1,1}}}.
\]
\normalsize

Dividing numerator and denominator by $n^2(2\epsilon)^p$ and letting $n \rightarrow \infty$ implies that the right-hand side is

\beqn
\frac{\prob{\c}^2}{\prob{\c}^2 + \displaystyle\lim _{ n \rightarrow \infty} \frac{n-1}{n^2} \prob{\c} \frac{\prob{B_{1,2}\mid B_{1,1}}}{(2\epsilon)^p}+\displaystyle\lim _{ n \rightarrow \infty} \frac{n-1}{n^2}\prob{\c}\frac{\prob{B_{2,1}\mid B_{1,1}}}{(2\epsilon)^p} +\displaystyle\lim_{n \rightarrow \infty} \frac{\prob{\c}}{n^2(2\epsilon)^p}}.
\eeqn

But

\beqn
\lim _{\epsilon \rightarrow 0}f(\x~|~B_{1,1})=\frac{f^2(\x+c)f(\x)}{\int_{\reals^p} f^2(\z+c)f(\z)dz}
\eeqn

This implies that $\lim _{\epsilon \rightarrow 0}\frac{\prob{B_{1,2}\mid B_{1,1}}}{(2\epsilon)^p}$ and $\lim _{\epsilon \rightarrow 0}\frac{\prob{B_{2,1}\mid B_{1,1}}}{(2\epsilon)^p}$  both go to a constant so the two middle terms in the denominator goes to $0$. The result follows by straightforward algebra.
\end{proof}

This implies that the final rate for general number of covariates $p$ is $O_p(n^{-(1 + 2/p)})$.

According to \citet{Bertsimas2015} the optimal rate is exponentially small. But the optimal rate produces a unique solution and hence is not random. Our rate is small in an absolute sense, but relatively much larger than the optimal rate. This suggests that there are many solutions of this rate. Choosing one of these many solutions still results in a division into treatment and control that is small while maintaining some of the randomness from randomization. Definining \qu{randomness} is treated in the Section~\ref{sec:randomness} and Section~\ref{sec:inference} posits why it is important. Before we discuss these topics, we note that we have developed enough theory to understand how our algorithm would perform if $B$ were defined differently.

\subsection{Choice of balance function}\label{subsec:choice}

There are many ways to measure balance. \citet{Franklin2014} details ten popular choices and compares each in a simulation study of bias. Our choice of objective $B$, the sum of absolute standardized average differences is found to perform among the best.

However, would the theoretical performance of our greedy pair-switching algorithm be invalidated if we chose another definition of $B$? \citet{Morgan2012}, \citet{Greevy2004} and others define $B$ as Mahalanobis distance for instance. We show here that for the choice of Mahalanobis distance, we would have the same results.

Consider a regression where the variable of interest is the treatment effect (as compared to a control) which is expressed in effects coding as a $1$ for treatment and $-1$ for control  (without loss of generality). It is straightforward to show that the estimated standard error for the treatment effect is 

\beqn 
s_{b_T} = s_e \frac{1}{\sqrt{n}}\sqrt{\frac{2}{1-Q_p\frac{2}{2n-1}}}
\eeqn

\noindent where $Q$ is the Mahalanobis distance between the $p$-dimensional $\overline{x_T}$ and $\overline{x_C}$ and as such has mean of $p$ and $s_e$ is the classical estimate of the standard error of the response noise. But making each of the $p$ averages between treatment and control to be of order $O_p(n^{-2/p})$ ensures that if $B$ was defined as the Mahalanobis distance, the results will be of the same order. 

Understanding the theoretical performance of our greedy pair-switching algorithm for other specific measures of balance is left to future work. However,  consider when $B$ is ``nice'', then the first order approximation is linear and therefore we expect that balancing the $L_1$ distance as we did here, will result in a similar order of balance discussed herein.

\section{Two Measures of Randomness}\label{sec:randomness}

The greedy heuristic produces balance between the treatment and control that is very small, $B = O_p(n^{-3})$ in the case of one covariate. In fact, there are solutions that guarantee much smaller balance, but these solutions are more deterministic. We consider the behavior of various design algorithms with regard to how random they are. In our context, \textit{complete randomness} is defined to be all $\binom{2n}{n}$ possible assignments to treatment and control are equally likely. 

\subsection{An Entropy Metric of Randomness}

A logical metric to capture deviations from complete randomness is entropy over the assignments computed via the probabilities of all $\binom{2n}{n}$ possible assignments. An assignment that is not completely random will have non-equal probabilities over the possible divisions of the $2n$ individuals into treatment and control. 

The actual entropy will be difficult to determine. We take the approach of examining the bivariate distributions of each pair of subjects within an assignment. Specifically, let $S=\{(i,j)~|~1\,\le\,i\,<\,j\,\le\,2n\}$ be all pairs of individuals and $|S|=\binom{2n}{2}$, the number of points in $S$. In a random solution each pair $s \in S$  has the same probability of $s_n=\frac{n-1}{2n-1}$ of being in the same group. An algorithm can be characterized by $p_s$, which is the probability that the pair of individuals in $s$ are in the same group for all $s \in S$. We can then define pairwise entropy as 

\begin{equation}
\label{eq:entropy}
E_n=\frac{1}{|S|}\frac{\sum _{s \in S} p_s \natlog{p_s} +(1-p_s) \natlog{1-p_s}}{s_n \natlog{s_n} + (1-s_n)\natlog{1-s_n}}
\end{equation}

where, by common convention, $0 \times \natlog{0} = 0$. Note that $0 \le E_n \le 1$, where $E_n=0$ if the assignment is deterministic so that $p_s$ is either $0$ or $1$ for all $s$ and $E_n=1$ if $p_s=s_n$ for all $s$ (i.e. under complete randomness).

\subsection{A Standard Error Metric of Randomness}

Under complete randomness the pair probabilities are all the same and when the algorithm is deterministic the pair probabilities are either $0$ or $1$. Thus another logical metric is the standard deviation of these $p_s$'s from the probability under complete randomness, $p_s = \frac{n-1}{2n-1}$ as below: 

\beqn
s(p_s) = \sqrt{\oneover{\binom{n}{2}} \sum_{s \in S} \squared{p_s-\frac{n-1}{2n-1}}}.
\eeqn

The standard deviation is $0$ if completely random; the larger the standard deviation, the further the assignment is from complete randomness. Note that the maximum standard deviation occurs when the algorithm is deterministic in which case  $n(n-1)$ of the probabilities are $1$ and $n^2$ of the probabilities are $0$. Thus, we can scale the above expression to be between $\zeroonecl$ by dividing by this maximum. Straightforward algebra then defines the metric:

\begin{equation}\label{eq:se}
D_n=\frac{1}{n}\sqrt{\frac{2n-1}{n-1}\sum_{s \in S} \squared{p_s-\frac{n-1}{2n-1}}},
\end{equation}

It is likely that the two measures will preserve order; i.e., if $p_s$ has higher entropy than $q_s$  then the  standard deviation of the $p_s$ is smaller than the standard deviation of the $q_s$. However, this result is not true in general. Let $p=(.3,.3,.9)$ and $q=(.153,.5,.847)$. The entropy, $-\frac{\sum_{i=}^3 p_i\log(p_i)+(1-p_i)\log(1-p_i)}{3\log(2)}=.744$ and twice (to scale from $0$ to $1$)  the standard deviation of $p$ is $0.693$. The corresponding entropy and standard deviation of   $q$ are $0.745$ and $0.694$. So $q$ is more random based on entropy and $p$ is more random based on the variability of the probabilities.

\subsection{The Randomness of our Design Algorithm}

We now want to show that the greedy heuristic is nearly random. To this end, assume that there is an algorithm $\mathcal{A}$ that begins with randomly choosing $n$ observations from the available $2n$ items for treatment assignment. Furthermore, there exists indices $A \subseteq \{1,\ldots, n \}$ where all subjects indexed by $A$ retain the assignment based on randomization, where $A$ is fixed; that is, it does not depend on the randomization. The following theorem follows straightforwardly. 

\begin{theorem}\label{thm:random}
The algorithm, $\mathcal{A}$ satisfies $\displaystyle \lim_{n \rightarrow \infty} E_n = 1$ and $\displaystyle \lim_{n \rightarrow \infty} D_n=0$ if $\displaystyle \lim_{n \rightarrow \infty} \frac{|A|}{2n}=1$.
\end{theorem}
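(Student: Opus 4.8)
The plan is to exploit the single structural fact the hypothesis hands us: every subject indexed by the fixed set $A$ keeps the group it received under the initial complete randomization. First I would record the key observation that for a pair $s=(i,j)$ with both $i,j \in A$, the probability $p_s$ that $i$ and $j$ land in the same group is \emph{exactly} the complete-randomization value $s_n = \frac{n-1}{2n-1}$. This is where the assumption that $A$ is fixed (nonrandom) is essential: because $A$ does not depend on the draw, the joint assignment of the subjects in $A$ is just the marginal of a uniform size-$n$ subset restricted to $A$, and whatever $\mathcal{A}$ does to the remaining subjects never alters what happened inside $A$. A one-line count, $\prob{i,j \text{ together}} = 2\binom{2n-2}{n-2}/\binom{2n}{n} = s_n$, nails this down.

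With that in hand I would partition the pair set $S$ into \emph{good} pairs (both endpoints in $A$), of which there are $\binom{|A|}{2}$, and \emph{bad} pairs (at least one endpoint outside $A$), of which there are $\binom{2n}{2}-\binom{|A|}{2}$. The hypothesis $|A|/2n \to 1$ is exactly what forces the good fraction $\binom{|A|}{2}/\binom{2n}{2} = \frac{|A|}{2n}\cdot\frac{|A|-1}{2n-1}$ to tend to $1$, hence the bad fraction to vanish. Good pairs are harmless for both metrics: they contribute $(p_s-s_n)^2 = 0$ to \eqref{eq:se} and a summand equal to $1$ to \eqref{eq:entropy}. The whole argument then reduces to showing that the bad contributions, though uncontrolled individually in $p_s$, are uniformly bounded and are outnumbered so heavily that they wash out.

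For $D_n$ this is a direct squeeze: each bad term satisfies $(p_s-s_n)^2 \le (1-s_n)^2$ (the maximum of $(p-s_n)^2$ on $[0,1]$, attained at $p=1$ since $s_n < \tfrac12$), so $D_n^2 \le \frac{1}{n^2}\frac{2n-1}{n-1}(1-s_n)^2\parens{\binom{2n}{2}-\binom{|A|}{2}}$; substituting $(1-s_n)^2 = n^2/(2n-1)^2$ and $\binom{2n}{2}=n(2n-1)$ collapses the bound to $\frac{n}{n-1} - \binom{|A|}{2}/((n-1)(2n-1))$, whose two pieces each tend to $1$, so $D_n^2 \to 0$. For $E_n$ I would use that the per-pair summand is $h(p_s)/h(s_n)$ with $h(p) := p\natlog{p}+(1-p)\natlog{1-p} \in [-\natlog{2},0]$; hence each bad summand lies in $[0,\ \natlog{2}/|h(s_n)|]$, and since $s_n \to \tfrac12$ the upper constant tends to $1$. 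Writing $E_n$ as (good fraction) plus (bad terms), I sandwich $\binom{|A|}{2}/\binom{2n}{2} \le E_n \le \binom{|A|}{2}/\binom{2n}{2} + (\text{bad fraction})\cdot \natlog{2}/|h(s_n)|$; both bounds tend to $1$, giving $E_n \to 1$.

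The main obstacle is not any single estimate but getting the key observation cleanly stated and justified: the identity $p_s = s_n$ for $s \subseteq A$ is what turns the vague phrase ``subjects in $A$ retain their random assignment'' into a usable hypothesis, and it genuinely requires $A$ to be fixed rather than data-dependent. A secondary point worth care is that the entropy summands need not be $\le 1$ (they exceed $1$ near $p_s=\tfrac12$ because $s_n \ne \tfrac12$), so one cannot simply invoke $E_n \le 1$; the boundedness of $h$ below by $-\natlog{2}$ is what keeps these terms finite and makes the squeeze go through.
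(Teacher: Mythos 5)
Your proof is correct and follows essentially the same route as the paper's: identify that $p_s = s_n$ exactly for pairs contained in $A$, note that these pairs make up a fraction $\binom{|A|}{2}/\binom{2n}{2} \to 1$ of $S$, and bound the remaining pairs' contributions uniformly so they wash out. The only differences are that you supply details the paper leaves implicit --- the combinatorial identity $2\binom{2n-2}{n-2}/\binom{2n}{n} = s_n$ justifying the key observation, and a two-sided sandwich for $E_n$ in place of the paper's one-sided bound combined with its earlier (unproved, but true by convexity of $p\natlog{p}+(1-p)\natlog{1-p}$ together with $\sum_{s} p_s = n(n-1)$) assertion that $E_n \le 1$.
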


\begin{proof}
Let $S_A$ be the subset of $S$ where $i \in A$ and $j  \in A$.  If $s \in S_A$ then $p_s=s_n$. But the number of points in $S_A$ is $\binom{|A|}{2}$. Hence 
\[
\lim_{n \rightarrow \infty} E_n \ge \lim_{n \rightarrow \infty} \frac{\binom{|A|}{2}}{|S|}= \lim_{n \rightarrow \infty}\frac{|A|(|A|-1)}{2n(2n-1)} = 1.
\]
Similarly the terms in Definition~\ref{eq:se} are zero for all $s \in S_A$ and are maximally $\big(\frac{n}{2n-1}\big)^2 \le 1$ otherwise. Therefore,
\[
D_n \le \frac{1}{n}\sqrt{\frac{2n-1}{n-1} |\overline{S_A}|}.
\]
But $\lim _{ n \rightarrow \infty} \frac{ |\overline{S_A}|}{n^2} = 0$, therefore, $\lim _{n \rightarrow \infty}D_n=0$.
\end{proof}

Although it is difficult to show that the greedy algorithm satisfies the condition of the theorem, it undoubtedly does. Consider a modification of the algorithm. We can condition on the $2n$ values and consider the set of points $E$ that are the largest and smallest $b\sqrt{n}$ observations for some suitably chosen value of $b$. We begin by randomly selecting $n$ values to the treatment. We run a modified greedy pair-switching algorithm only considering switching items that are in $E$. Once the sum of the difference in the values of the two groups is less than $c$, make one additional switch. \\

\begin{corollary}
For any value of $\gamma$ there exists and $N$  such that for all $n>N$, the probability that the balance is $O(n^{-3})>\gamma$ if the modified greedy heuristic is employed when there is one covariate.
\end{corollary}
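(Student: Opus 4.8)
The plan is to split the modified heuristic into its two phases, control each phase separately, and combine by a union bound on their (small) failure probabilities. Throughout I would condition on the $2n$ observed covariate values, so that the extreme set $E$ (the largest and smallest $b\sqrt{n}$ observations) is fixed and the only randomness is the initial complete-randomization assignment. To handle Phase~1, the greedy loop confined to $E$, I would start from the fact that the initial difference in the sums $\sum_{i=1}^n X_{T,i} - \sum_{i=1}^n X_{C,i}$ is asymptotically $\normnot{0}{2n}$, hence $O_p(\sqrt{n})$; thus for the prescribed $\gamma$ there is a constant $M$ making $\prob{\abss{\sum_{i=1}^n X_{T,i} - \sum_{i=1}^n X_{C,i}} \le M\sqrt{n}}$ as close to $1$ as I please. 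Each greedy switch that trades an extreme in the heavier group for an oppositely-signed extreme in the lighter group lowers the absolute difference in the sums by an amount bounded below by a positive constant (essentially the gap between the upper and lower extreme values, which does not vanish for a non-degenerate covariate). Hence at most $O(M\sqrt{n})$ such switches push the difference below the fixed constant $c$, and choosing $b$ large enough relative to $M$ guarantees $E$ supplies enough extremes on each side; this is exactly the corollary bounding the number of switches by $O(\sqrt{n})$, specialized to switches inside $E$.

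Next I would treat the single terminal switch, which is where Theorem~\ref{thm:main} does the work. After Phase~1 the difference in the sums equals some constant no larger than $c$; treating that value as the target, I would apply Theorem~\ref{thm:main} to the pairs $(X_{T,i}, X_{C,j})$ with $i,j \notin E$. Because Phase~1 touched only indices in $E$, there remain $(n-b\sqrt{n})^2 = n^2(1+o(1))$ admissible pairs, and since the excluded pairs that meet $E$ number only $O(n^{3/2}) = o(n^2)$, the retained collection still behaves like the full $n^2$ events in the spirit of Remark~\ref{rem:thm1b}. Theorem~\ref{thm:main} then produces, with probability exceeding any preassigned level, a pair lying in $A(c,\epsilon)$ with $\epsilon = \Theta(n^{-2})$; switching that pair drives the difference in the sums to $O(n^{-2})$. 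Since $B = \abss{\overline{X_T}-\overline{X_C}}/s$ is this quantity divided by $n$ up to the $\Theta(1)$ factor $s$, the balance is $O(n^{-3})$. Choosing the two phase-level confidence targets so their failure probabilities sum to less than $1-\gamma$ then yields the claim for all $n>N$.

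The step I expect to be the main obstacle is the distributional bookkeeping in the terminal switch. Theorem~\ref{thm:main} is proved for $X_{T,i}, X_{C,j} \iid f$, whereas after conditioning on the values and deleting the extremes the admissible covariates are the ``middle'' order statistics and are no longer exactly iid $f$. The justification I would supply is that the target $c$ sits in the bulk of the distribution, so the pairwise proximity constants $P(c)$, $P^+(c)$, $P^-(c)$ entering de Caen's bound are unchanged to leading order by excluding a vanishing ($O(1/\sqrt{n})$) fraction of extreme mass; making this asymptotic equivalence precise, and confirming that the random stopping value of the difference in the sums is absorbed uniformly into the phrase ``for any value of $c$'' in Theorem~\ref{thm:main}, is the delicate part. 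The remaining ingredients---the Gaussian tail bound for the initial difference, the constant per-switch decrement, and the final union bound---are routine.
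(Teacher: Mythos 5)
Your proposal is correct and follows essentially the route the paper intends: the paper's own proof is a single sentence deferring to ``the construction of the modified greedy heuristic,'' which is exactly the two-phase argument you spell out (Phase~1 switches within $E$ reduce the difference in sums below $c$ in $O(\sqrt{n})$ steps, then one terminal switch among the $n^2(1+o(1))$ non-extreme pairs via Theorem~\ref{thm:main} and Remark~\ref{rem:thm1b} lands the difference at $O(n^{-2})$, hence $B=O(n^{-3})$). You have in fact supplied considerably more detail than the paper does, and the delicate points you flag --- the non-i.i.d.\ character of the middle order statistics after excluding $E$ and the randomness of the stopping constant $c$ --- are genuine issues that the paper's one-line proof silently passes over as well.
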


\begin{proof}
This follows from Theorem~\ref{thm:random} and the construction of the modified greedy heuristic.
\end{proof}

Finally, one can ask how random are other commonly used algorithms. 

Consider in the case of one covariate the simple algorithm that creates pairs of observations from among the $2n$ observations where the largest two values form the first pair, the third and fourth largest values the second pair and so forth. If one randomly assignms an individual in each pair to the treatment group and the other to the control group, $B=O_p(n^{-1})$. This assignment is not completely random either. For example, the value of $D_n=\frac{1}{\sqrt{n}}\sqrt{\frac{n+1}{2(n-1)}}$ which goes to zero at rate $n^{-\frac{1}{2}}$,  but is not zero. This algorithm cannot directly be extended to $p$ covariates, but there is software (e.g. \texttt{optmatch} by \citealt{Hansen2006}) that may be used. It should be noted that the purpose of matching is not only concerned with balance. It also is justified as a way to guard against misspecification and potentially unobserved covariates. 

If one uses the rerandomziation scheme of \citet{Morgan2012}, the balance is of $B=O(n^{-R\sqrt{n}})$ where $R$ is the number of randomizations. Thus, this approach requires many rerandomzations before its rate is of the same order as the greedy heuristic. Furthermore, this approach is also not perfectly random as defined by the measures presented here.

\section{Simulation Results}\label{sec:sims}

To illustrate our main results we turn to simulations. All simulations herein were performed with \texttt{GreedyExperimentalDesign}, an \texttt{R} package available on \texttt{CRAN} whose core is implemented in \texttt{Java} for speed. We first illustrate the main claim of Section~\ref{sec:one_cov} that the greedy pair switching algorithm creates designs with vastly improved balance as measured by $B$, the absolute difference in the average values of the one covariate among treatment and control. 

Table~\ref{tab:1} shows the first twenty iterations of $n=50$ (with $p=1$) by initial and final balance as well as the number of switches. Note that among 20 repetitions balance is diminished by 5-7 orders of magnitude. Since the covariate is simulated as standard normal, the resulting final balances are all less than 5 $\times 10^{-5}$ standard deviations. This is a miniscule amount; and it is likely smaller than the precision of the covariate measurement.

\begin{table}[h]
\centering
\begin{tabular}{ccc}
  \hline
Initial Balance& \# of switches & Final Balance \\ 
  \hline
0.37 & 3 & 0.0000004 \\ 
  0.14 & 3 & 0.0000007 \\ 
  0.25 & 5 & 0.0000015 \\ 
  0.22 & 4 & 0.0000021 \\ 
  0.24 & 4 & 0.0000036 \\ 
  0.01 & 3 & 0.0000037 \\ 
  0.20 & 2 & 0.0000042 \\ 
  0.21 & 4 & 0.0000056 \\ 
  0.23 & 2 & 0.0000064 \\ 
  0.06 & 1 & 0.0000072 \\ 
  0.08 & 2 & 0.0000075 \\ 
  0.16 & 2 & 0.0000077 \\ 
  0.11 & 2 & 0.0000103 \\ 
  0.05 & 2 & 0.0000109 \\ 
  0.08 & 1 & 0.0000126 \\ 
  0.22 & 3 & 0.0000138 \\ 
  0.05 & 2 & 0.0000174 \\ 
  0.51 & 4 & 0.0000205 \\ 
  0.09 & 2 & 0.0000282 \\ 
  0.13 & 1 & 0.0000326 \\ 
   \hline
\end{tabular}
\caption{Twenty iterations of greedy pair switching on a dataset of $n=50$ of standard normal covariate: starting balance, number of switching and final balance ordered by best final balance.}
\label{tab:1}
\end{table}

How does the greedy pair switching algorithm compare to the optimal balance? Since finding the optimal balance is an exponentially difficult problem in $n$, we consider the case when  $n=14$ so that the optimal can be found by complete enumeration. We generate $28$ observations from a standard normal distribution and  display the balance results for each of five random assignments followed by the balance results of the greedy pair-switching algorithm and finally, the optimal balance in Table~\ref{tab:2}. We note that while optimal performs at $\frac{\sqrt{n}}{2^{2n}}$ (i.e. $\approx 10^{-8}$ here) and greedy pair switching performs at approximately the $\frac{1}{n^3}$ (i.e. $\approx 10^{-4}$ here), the difference is likely once again of little practical difference when considering the standard deviation of the covariate and likely the measuring instrument used to assess the covariate. The difference between optimal and our procedure is further likely to be dwarfed when considering inexplicable noise in the response function.

\begin{table}[ht]
\centering
\begin{tabular}{rrr}
  \hline
Initial Balance& \# of switches & Final Balance \\ 
  \hline
0.06 & 2 & 0.000130390 \\ 
  0.15 & 1 & 0.000457781 \\ 
  0.03 & 2 & 0.000772729 \\ 
  0.92 & 3 & 0.000909507 \\ 
  0.82 & 3 & 0.001875810 \\ \hline
\multicolumn{2}{r}{Optimal}  & 0.000000015 \\ 
   \hline
\end{tabular}
\caption{Five iterations of greedy pair switching on a dataset of $n=14$ of a standard normal covariate: starting balance, number of switching and final balance ordered by best final balance. Bottom row: exhaustive search of the $\binom{28}{14} = 40,116,600$ possible allocation vectors to find the optimal balance. Our software package runs this search asynchronously and in parallel.}
\label{tab:2}
\end{table}

We also illustrate that the covariate distribution does not matter when considering the exponential reduction in balance in Figure~\ref{fig:diff_dist}. Here, we simulate realizations of normal, exponential and uniform for values of $\log_{10}(n)$ ranging from 1-2.5. See discussion just after (\ref{eq:p_c}) about how changing this distribution is not likely to matter. 

\begin{figure}[htp]
\centering
\includegraphics[width=3.8in]{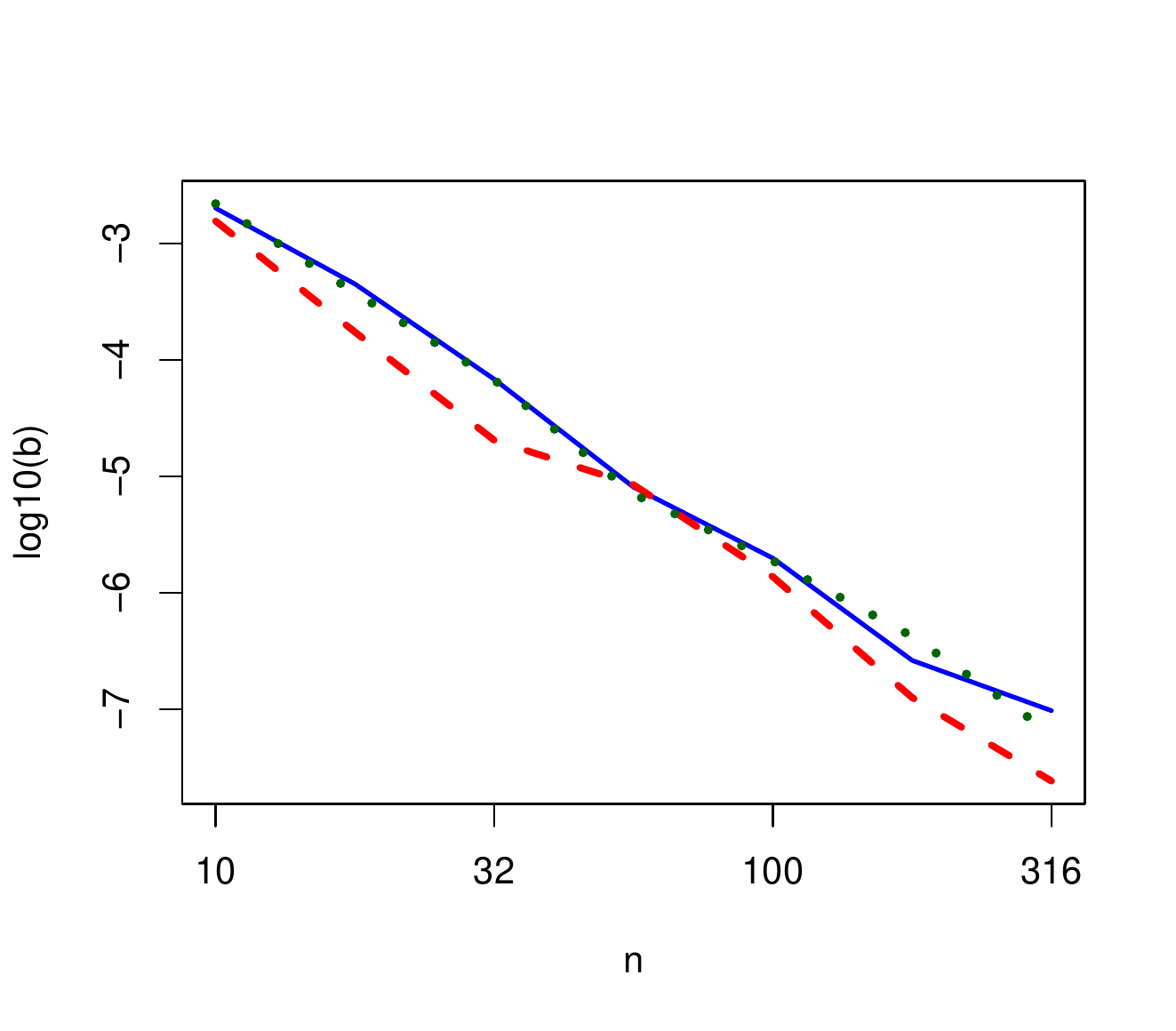}
\caption{$\log_{10}(B)$ by $n \in 10^{\braces{1, ~1.25,~ \ldots,~ 2.5}}$ for the standard normal design (solid blue), standard exponential design (dashed red) and standard uniform design (dotted green) averaged over $r=1000$ replicates.}
\label{fig:diff_dist}
\end{figure}%

To explore the rate relationship proven in Section~\ref{sec:mult_cov}, we simulate $n \in 10^{\braces{1, ~1.25,~ \ldots,~ 2.5}}$ and $p \in \braces{1,2,5,10,40}$. The log-log plot of balance is displayed in Figure~\ref{fig:np_a}. Note the linear relationship which becomes flatter as the number of covariates increases. We also explore the number of greedy pair switches as a function of $n$ and $p$ in Figure~\ref{fig:np_b} but we do not further investigate its relationship.

\begin{figure}[htp]
\begin{subfigure}[b]{0.5\linewidth}
\centering
\includegraphics[width=3.2in]{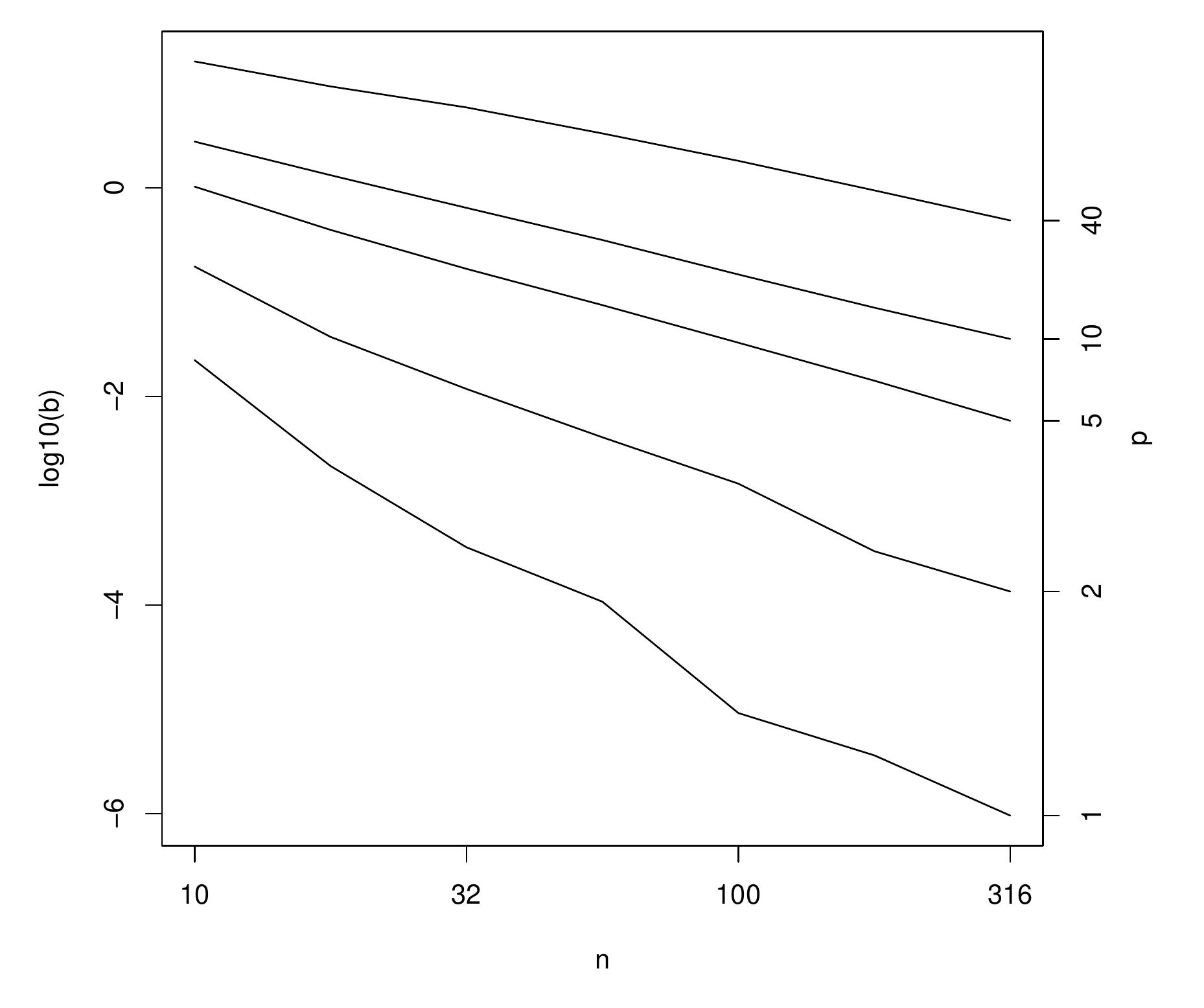}
\caption{}
\label{fig:np_a}
\end{subfigure}%
\begin{subfigure}[b]{0.5\linewidth}
\centering
\includegraphics[width=3.2in]{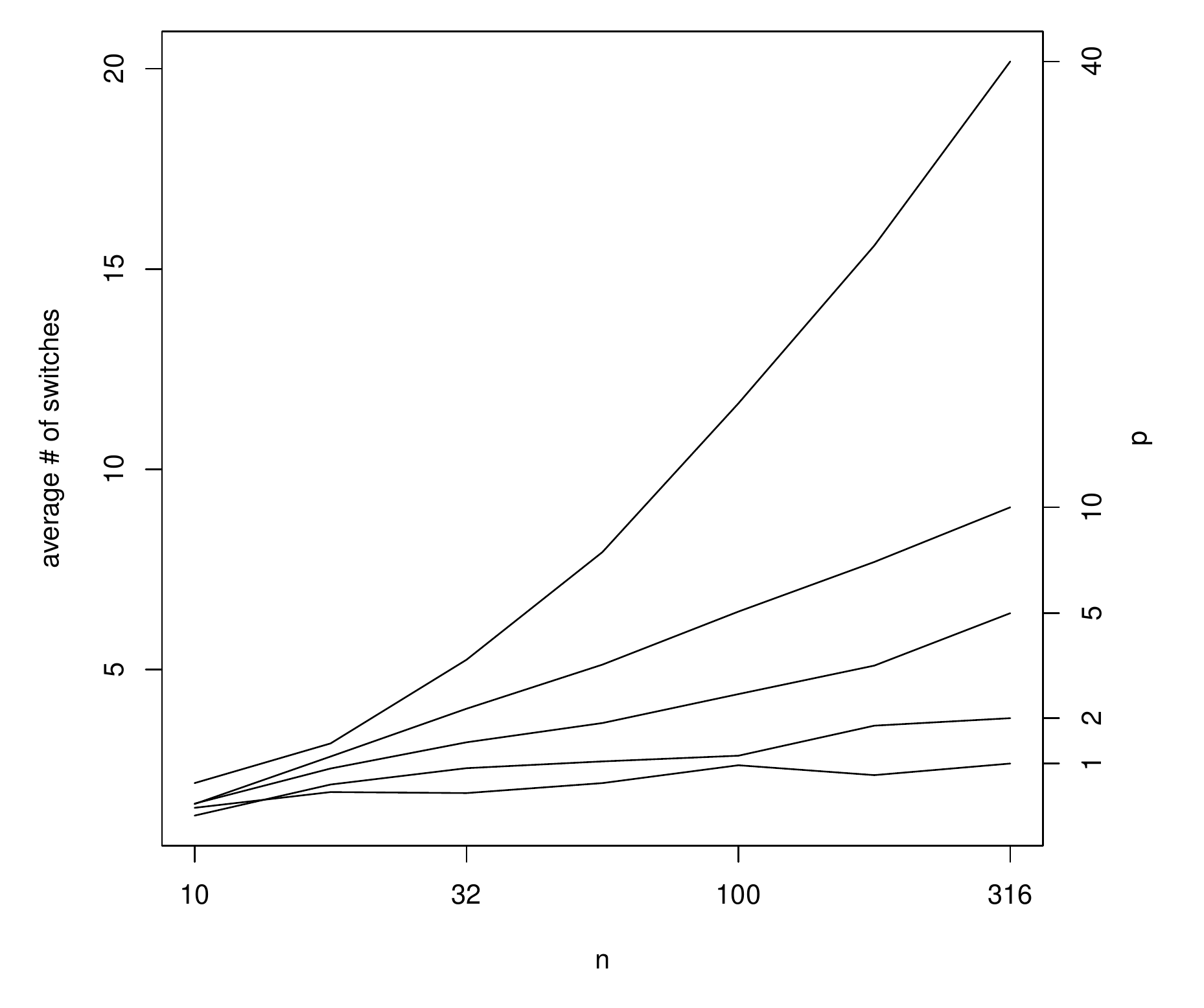}
\caption{}
\label{fig:np_b}
\end{subfigure}
\caption{Left: $B$ by $n$ and $p$. Right: number of switches in the by $n$ and $p$. The average of $r=1000$ repetitions for both plots.}
\label{fig:np}
\end{figure}

Recall the general rate of our algorithm with $p$ covariates is $O_p(n^{-(1 + 2/p)})$ which implies the following linear relationship with slope coefficients: 

\beqn
\natlog{B} = c(n,p,X) + (\textbf{-1}) \natlog{n} + \textbf{(-2)}\frac{1}{p} \natlog{n} + \errorrv
\eeqn

\noindent where $c(n,p,X)$ is a constant and $\errorrv$ is noise (distribution unknown) due to the random starting allocation vectors and the distribution of the covariate(s). We provide regression confirmation of this rate in Table~\ref{tab:regression} which confirms the -1 coefficient on the log sample size and the -2 coefficient on log sample size crossed with inverse covariate dimension. In practice, remember our claims are asymptotic and we have noticed in many unshown simulations slower convergence than displayed here. We leave exploration of the differential rates of convergence to future work.

\begin{table}[h] 
\centering 
\begin{tabular}{lc} 
\\$R^{2}$ = 0.966 & log OLS coef. $\pm$ $2 \times $s.e. \\ 
\hline \\[-1.8ex] 
 $c(X)$ & ~3.700 $\pm$ 0.176 \\ 
 $p^{-1}$ & -3.681 $\pm$ 0.244   \\ 
 $\natlog{n}$ & \textbf{-1.042} $\pm$ 0.042 \\ 
 $\natlog{n} \times p^{-1}$ & \textbf{-2.063} $\pm$ 0.082  \\ 
\end{tabular} 
\caption{Regression of log balance on log $n$ cross inverse $p$ for $r=100$ and for the $n \times p$ simulation design of Figure~\ref{fig:np_a} for a total of $3,500$ regressed observations.} 
\label{tab:regression} 
\end{table} 

Finally, we demonstrate our titular claim --- our algorithm provides impressive balance performance while not sacrificing randomness in the experimental design. Figure~\ref{fig:entropies} plots randomness as measured by the entropy metric of Definition~\ref{eq:entropy} and Figure~\ref{fig:ses} plots randomness as measured by the standard error metric of Definition~\ref{eq:se} both as a function of the $n$ and $p$ found in Figure~\ref{fig:np}. Note that by $n \approx 100$, our designs are nearly indistinguishable from complete randomization as gauged by both metrics of randomization.

\begin{figure}[htp]
\begin{subfigure}{0.5\linewidth}
\centering
\includegraphics[width=3.2in]{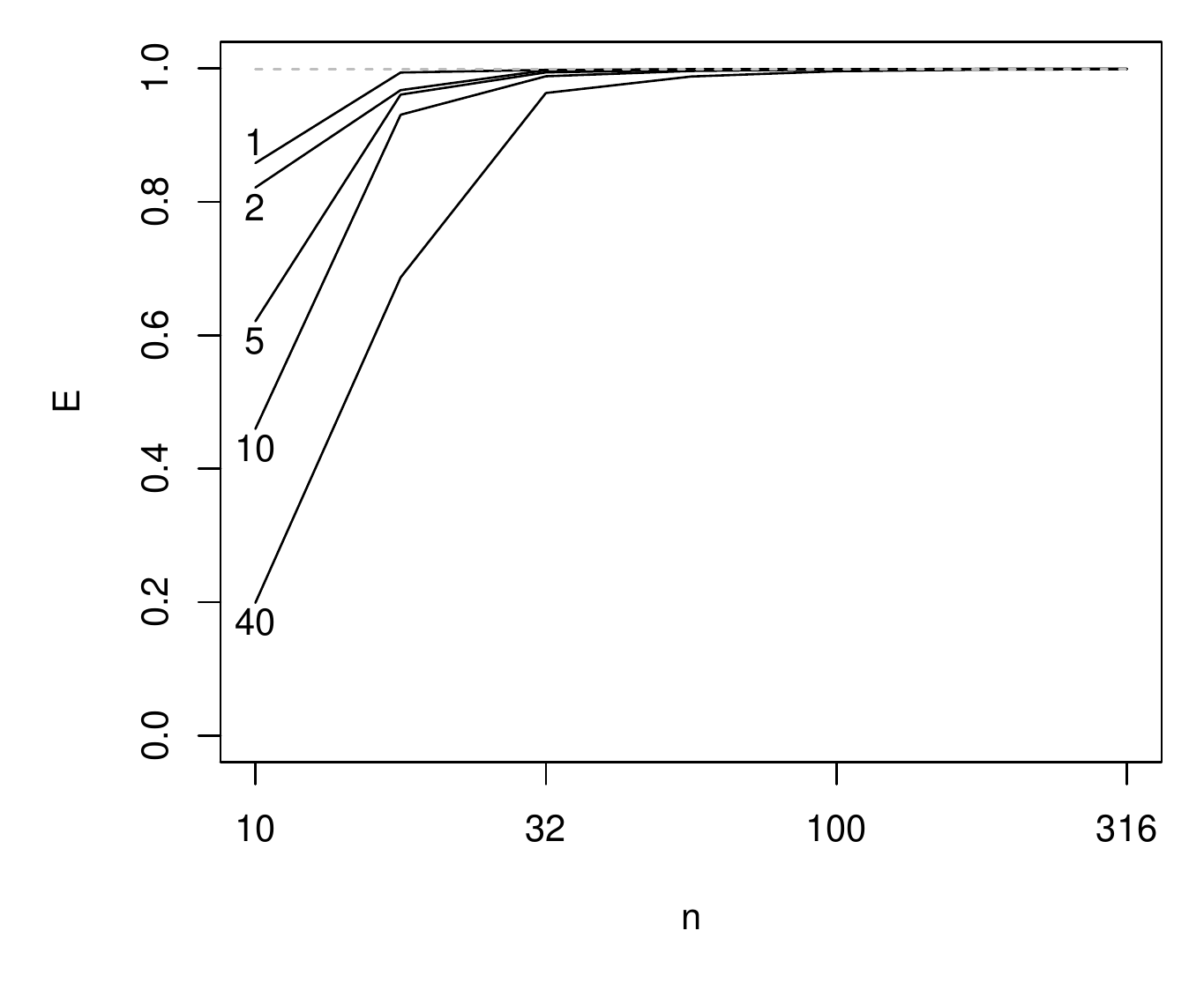}
\caption{$E$}
\label{fig:entropies}
\end{subfigure}%
\begin{subfigure}{0.5\linewidth}
\centering
\includegraphics[width=3.2in]{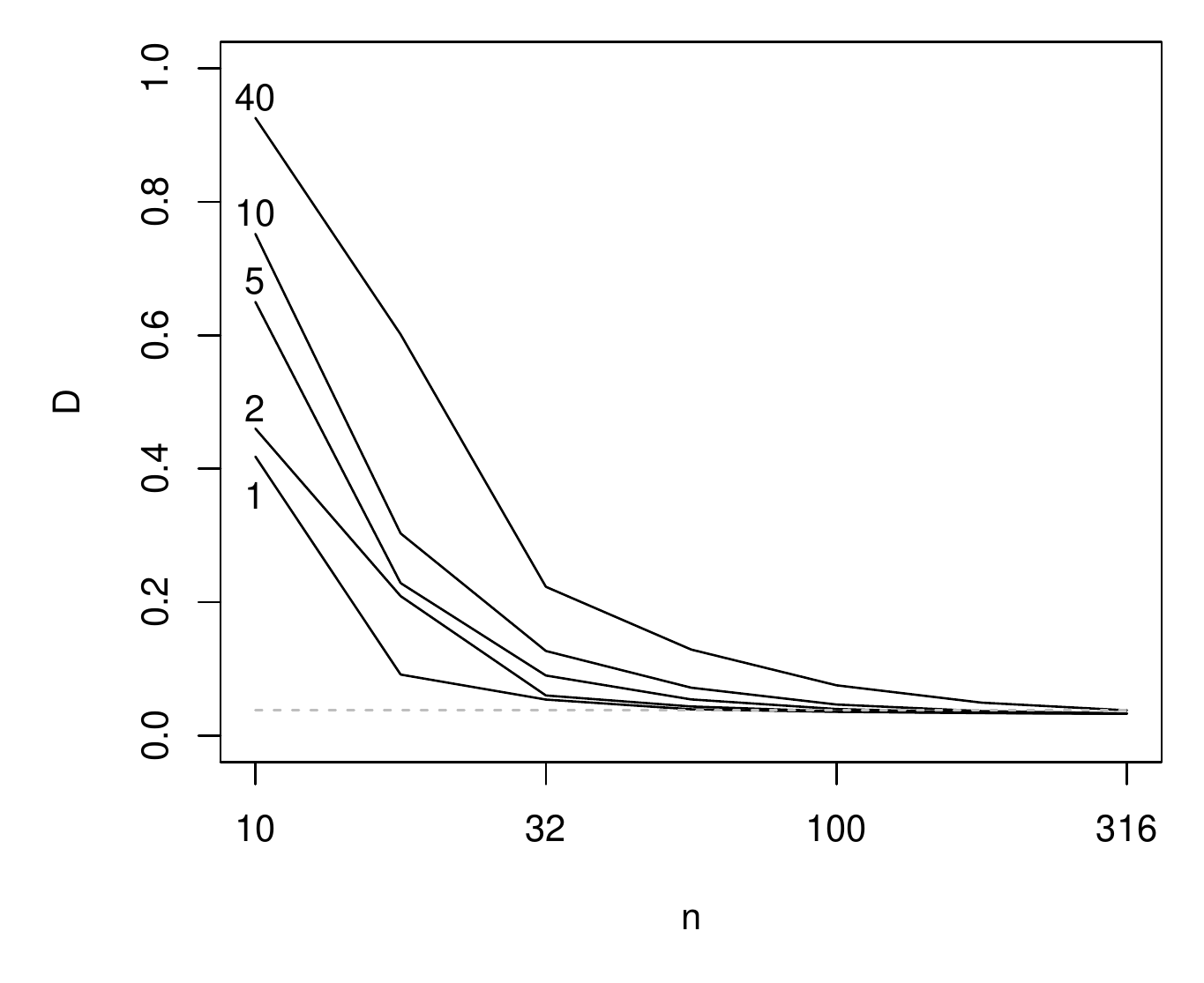}
\caption{$D$}
\label{fig:ses}
\end{subfigure}
\caption{Randomization Metrics by $n$ and $p \in \braces{1,2,5,10,40}$. The dotted line plots the metric for completely randomized vectors. Both metrics were estimated with $r=1,000$ runs of the algorithm from different starting points and different realizations of the normal covariates. The level of $p$ is labeled for each line in the plot.}
\label{fig:rand}
\end{figure}

\section{Inference from a Greedy Pair-Switching Design}\label{sec:inference}

Consider an experiment where the randomization is allocated via our greedy pair switching procedure. Responses for the treatment group $Y_{T,1}, \ldots, Y_{T,n}$ and for the control group $Y_{C,1}, \ldots, Y_{C,n}$ are collected and we wish to make inference for the additive treatment effect $\beta$ in the model $Y_{T,i} = \beta + f(X_{T,i}) + \errorrv_{T,i}$ and $Y_{C,i} = f(X_{C,i}) + \errorrv_{C,i}$ where $X$ denotes the appropriate length covariate vector, $f$ denotes the not-necessarily linear conditional expectation function of the response and the $\errorrv$'s denote mean-centered noise independent of the $X$'s.

How can we draw frequentist inference for $\beta$ to generate a confidence interval or test a hypothesis? One can use the classic unbiased \emph{differences-in-means estimator}, $\hat{\beta} := \overline{Y_T} - \overline{Y_C}$. We first prove consistency of this estimator when the allocation vector was designed using our greedy pair-switching algorithm under any response model $f$ (even if the treatment affect is not additive).

\subsection{Consistency of the Differences-in-Means Estimator}\label{sec:consistency}

Let $\bar{Y}^0_C$, $\bar{Y}^0_T$ to be the mean responses of the control and treatment groups, respectively, before the first iteration and $\bar{Y}^{f}_C$, $\bar{Y}^{f}_T$ are the final mean responses of the greedy algorithm. 
We have that

\beqn
(\bar{Y}^0_T - \bar{Y}^0_C)-(\bar{Y}^{f}_T - \bar{Y}^{f}_C) ~=~ \frac{2}{n}\sum_{\substack{\text{only the} \\ \text{switched} \\ \text{pairs}~(i,j)}}(Y_i - Y_j) ~\leq~ \frac{2}{n} N_s (\Ymax-\Ymin),
\eeqn

where $\Ymax := \max\{Y_1,\ldots, Y_{2n}\}$, $\Ymin := \min\{Y_1,\ldots,Y_{2n}\}$ and $N_s$ is the number of switches. We would like to show that


\bneqn\label{eq:convp}
(\bar{Y}^0_T - \bar{Y}^0_C) - (\bar{Y}^{f}_T - \bar{Y}^{f}_C) \convp 0
\eneqn

which means that $\bar{Y}^{f}_T - \bar{Y}^{f}_C$ is consistent whenever $\bar{Y}^0_T - \bar{Y}^0_C$ is consistent (the latter holding true under any model). All that is missing is demonstrating that if $Y$ has a finite second moment then $\max\{Y_1,\ldots, Y_{2n}\}-\min\{Y_1,\ldots, Y_{2n}\}=o_p(\sqrt{n})$. We do so in the following proposition. \\ 

\begin{proposition}\label{prop:one}
Consider $Y$ a random response unconditioned on the assignment. Suppose that $\expe{Y^2} < \infty$, then $\frac{\max\{Y_1,\ldots,Y_{2n}\}}{\sqrt{n}} \convp 0$ and $-\frac{\min\{Y_1,\ldots,Y_{2n}\}}{\sqrt{n}} \convp 0$.
\end{proposition}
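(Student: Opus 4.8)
The plan is to prove the statement for the maximum; the claim for the minimum then follows verbatim by applying the same argument to $-Y$, which satisfies $\expe{(-Y)^2} = \expe{Y^2} < \infty$. To show $\Ymax/\sqrt{n} \convp 0$ I must control both tails. The lower tail is immediate: $\braces{\Ymax < -\epsilon\sqrt{n}}$ forces $Y_1 < -\epsilon\sqrt{n}$, so $\prob{\Ymax < -\epsilon\sqrt{n}} \leq \prob{\abss{Y} > \epsilon\sqrt{n}} \to 0$. Hence the entire task reduces to showing that for each fixed $\epsilon > 0$, $\prob{\Ymax > \epsilon\sqrt{n}} \to 0$ as $n \to \infty$.

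First I would bound the upper tail by a union bound over the $2n$ identically distributed responses (independence is not needed, since $\braces{\Ymax > t} = \bigcup_i \braces{Y_i > t}$):
\[
\prob{\Ymax > \epsilon\sqrt{n}} \;\leq\; 2n\,\prob{Y > \epsilon\sqrt{n}} \;\leq\; 2n\,\prob{Y^2 > \epsilon^2 n}.
\]
The naive continuation --- plain Chebyshev, $\prob{Y^2 > \epsilon^2 n} \leq \expe{Y^2}/(\epsilon^2 n)$ --- is precisely what fails, because it returns the constant $2\expe{Y^2}/\epsilon^2$, which does not vanish. \emph{This is the crux of the argument.} The fix is the sharper truncated-second-moment inequality
\[
\prob{Y^2 > \epsilon^2 n} \;\leq\; \frac{1}{\epsilon^2 n}\,\expe{Y^2\,\indic{Y^2 > \epsilon^2 n}},
\]
which follows from $\expe{Y^2\,\indic{Y^2 > \epsilon^2 n}} \geq \epsilon^2 n\,\prob{Y^2 > \epsilon^2 n}$. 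Substituting into the union bound cancels the factor $n$ and yields
\[
\prob{\Ymax > \epsilon\sqrt{n}} \;\leq\; \frac{2}{\epsilon^2}\,\expe{Y^2\,\indic{Y^2 > \epsilon^2 n}}.
\]

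Finally I would invoke integrability to close the argument: since $\expe{Y^2} < \infty$ and the integrand $Y^2\,\indic{Y^2 > \epsilon^2 n}$ is dominated by the integrable $Y^2$ while converging to $0$ almost surely (as $\indic{Y^2 > \epsilon^2 n} \to 0$ for each outcome once $n$ is large), dominated convergence gives $\expe{Y^2\,\indic{Y^2 > \epsilon^2 n}} \to 0$. Thus the right-hand side vanishes for every fixed $\epsilon > 0$, establishing $\Ymax/\sqrt{n} \convp 0$; the identical argument applied to $-Y$ gives $-\Ymin/\sqrt{n} \convp 0$. The only genuine subtlety, and the step I expect to be the main obstacle, is recognizing that a direct Chebyshev bound is too weak and that the extra factor of $n$ produced by the union bound must be absorbed by the \emph{tail} of the second moment rather than by the full second moment.
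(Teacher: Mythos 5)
Your proof is correct, and it takes a genuinely different route from the paper's. The paper exploits independence directly, writing $\prob{M_n \le t_0\sqrt{n}} = \prob{Y \le t_0\sqrt{n}}^n$ and reducing the problem to the tail condition $t^2\,\prob{Y > t} \tendt 0$, which it then derives from $\expe{Y^2\indic{Y>0}} = \int_0^\infty \prob{Y > \sqrt{t}}\,dt < \infty$ together with (implicitly) the monotonicity of the integrand. You instead use a union bound, the truncated Chebyshev inequality $\prob{Y^2 > \epsilon^2 n} \le \frac{1}{\epsilon^2 n}\expe{Y^2\,\indic{Y^2 > \epsilon^2 n}}$, and dominated convergence. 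Your diagnosis of the crux is exactly right: plain Chebyshev loses to the factor of $n$ from the union bound, and the tail of the second moment must absorb it. Your approach buys two things the paper's does not: it never uses independence of the $Y_i$ (only identical marginals, via the union bound), which is a meaningful robustness gain in a design context where responses could in principle be dependent; and it explicitly controls the lower tail $\prob{\Ymax < -\epsilon\sqrt{n}}$, which the paper's argument leaves implicit (showing only $\prob{M_n/\sqrt{n} \le t_0} \to 1$). What the paper's route buys in exchange is the cleaner intermediate fact that a finite second moment forces $t^2\prob{Y>t} \to 0$, a slightly stronger statement about the marginal tail that is reusable elsewhere. Both arguments are complete and correct.
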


\begin{proof}
We only show that the maximum is $o_p(\sqrt{n})$ since the equivalent argument for the minimum is symmetric. For notational convenience, let $M_n=\max\{Y_1, \ldots, Y_{n}\}$.

We first show that $\frac{M_n}{\sqrt{n}} \convp 0$ if $\frac{1-\prob{Y \le t}}{1/t^2} \tendt 0$. Fix $\varepsilon,t_0 >0$, by the condition, $\prob{Y \le t} \ge 1-\varepsilon/t^2$ for large enough $t$. Hence, for large enough $n$, 
\[
\prob{\frac{M_n}{\sqrt{n}} \le t_0} = \prob{{M_n} \le t_0 \sqrt{n}} = \prob{Y \le t_0 \sqrt{n}}^n \ge (1-\varepsilon/(t_0^2n))^n \tendn \exp{-\varepsilon/t_0^2}.
\]
Since this is true for all $\varepsilon > 0$ then, $\prob{\frac{M_n}{\sqrt{n}} \le t_0} \tendn 1$ for every $t_0>0$, i.e., $\frac{M_n}{\sqrt{n}} \convp 0$. 

We now show that if $Y$ has a second moment then the condition $\frac{1-\prob{Y \le t}}{1/t^2} \tendt 0$ is satisfied. We have that 
\[
\expe{Y^2 \indic{Y>0}} = \int_0^\infty \prob{Y^2 \indic{Y>0}  > t} dt = \int_0^\infty \prob{Y > \sqrt{t}} dt.
\]
Since this integral converges, then the integrand goes to zero faster than $1/t$, i.e., $ \frac{\prob{Y > \sqrt{t}}}{1/t} \tendt 0$ or equivalently that $\frac{\prob{Y > {t}}}{1/t^2} \tendt 0$. 
\end{proof}

\subsection{Valid Testing and Confidence Interval Construction}

Classically, the scaled differences-in-means estimate is compared to known quantiles of Student's $T$ distribution. However, this estimator ignores the contribution of $X$ and thus will not have a $T$ distribution. For example, in the case where $f$ is linear in $X$, the differences-in-means estimator will have a spurious noncentrality parameter in the numerator of the $T$ statistic \citep[page 409]{Efron1971}. 

Further, we used $X$ to create the allocation which improves the balance $B$, \qu{typically [creating a] more preceise estimated treatment [effect], making traditional Gaussian distribution-based forms of analysis statistically too conservative} \citep[page 6]{Morgan2012}. A more stern warning  is found in \citet[Section 2.1]{Senn2000} who writes that allocating a randomization using covariate information \qu{that are [later] not included in the model} is a decision which is \qu{pointless if not harmful} and \qu{incoherent}. Simulation results illustrating a closely-related concept can be found in \citet[Table 3]{Kapelner2014}.

We can also see from the reasoning in Section~\ref{sec:consistency} that $\bar{Y}^{f}_C - \bar{Y}^{f}_T$ does not have the same asymptotic distribution as $\bar{Y}^0_C-\bar{Y}^0_T$ since $\sqrt{n}((\bar{Y}^0_C-\bar{Y}^0_T) - (\bar{Y}^f_C-\bar{Y}^f_T))$ does not converge to 0 unlike when the difference is not multiplied by $\sqrt{n}$ in (\ref{eq:convp}).  Indeed, recent follow-up work on the difference-in-means estimator after allocation by \citeauthor{Morgan2012}'s (\citeyear{Morgan2012}) rerandomization procedure shows that their resulting estimator is asymptotically non-Gaussian \citep{Li2016}. It is possible our procedure can be shown to fit the same criterion proved therein but we leave this to future work.

One can find an estimator with asymptotic normality in the example in \citet[page 409]{Efron1971} by using a multivariable linear regression and reporting the slope coefficient of the binary allocation vector (assuming an assignment of 1 indicates the subject was assigned the treatment), a common practice in applied work from social science to medicine. \citet{Freedman2008} proves that this estimator has the problems of being biased itself and having biased standard errors. Thus, it may perform worse than the differences-in-means estimator and \qu{the reason for the breakdown is not hard to find: randomization does not justify the assumptions behind the OLS model} and we have no reason to rely on these assumptions in practice.

What procedure can we use to attain valid inference? We follow \citet[Section 2.2]{Morgan2012}, quoting Fisher, Tukey and others, who recommend the permutation test. \qu{This test can incorporate whatever rerandomization procedure was used, will preserve the significance level of the test and works for any estimator}. A permutation test can assess the validity of any \qu{sharp} null hypothesis (e.g. in a clinical trial implementation of our design, the sharp null hypothesis would be that each subject will have the same response under both treatment and control). Note that we are not required to understand the asymptotic distribution of the differences-in-means estimator nor its standard error using the strategy. The procedure is as follows.

We first select a resolution $R$ for inference permutation-based inference. We then run the greedy pair-switching algorithm for $R + 1$ replicates. We choose one allocation at random to provide to the experimenter as the true randomization of the subjects. The experiment is performed, the $Y$'s are collected and the differences-in-means estimate $\hat{\beta}$ is computed. For each of the remaining $R$ allocations, a faux differences-in-means estimate $\hat{\beta}_r$ is computed by imagining the replicate allocation vector was the true design. Since the relationship between assignment and response is now broken, the $\hat{\beta}_1, \ldots, \hat{\beta}_R$ constitute an empirical null distribution that the treatment has no effect on any subject. An approximate $\alpha$-level two-sided test for instance can be performed by creating a rejection region smaller than the $\alpha/2$ quantile and larger than the $1-\alpha/2$ quantile. Further, the set of all values not rejected under the sharp null of the treatment effect being $\hat{\beta}$ at significance level $\alpha$ would, by the duality of confidence intervals and hypothesis tests, constitute an approximate $1-\alpha$ confidence interval. 

\section{Concluding Remarks}\label{sec:conclusion}

We introduce a new randomization design procedure called greedy pair-switching which provides better balance while keeping designs close to what would be expected under complete randomization.

In addition, we considered covariates that are absolutely continuous. In practice, it is likely that some of the covariates are categorical. This is not an issue theoretically. For example, if two of the covariates of interest are gender and education, treated as a categorical variable with four levels then there are eight (and in general a finite number of) subpopulations based on the categorical variables. We can apply the greedy heuristic to each of these subpopulations. We can then consider each subpopulation in turn and use the greedy heuristic. This implies that all categorical variables will be balanced and all continuous variables will provide a $B=O(n^{-2/p})$.  However, the practical aspects of the result apply to relative size values of $n$, say $n=100$. The notion of considering subpopulations might not work that well in this case as dividing $n$ into say eight groups renders at least some small subpopulations. Alternatively, we can apply the greedy heuristic to the continuous variables, but only consider switches that do not increase the balance in any of the categorical variables.

One virtue of using the greedy algorithm is that one can apply it to any objective. As such, we can apply the greedy heuristic to the Mahalanobis distance directly (and our results will be of the same order as shown in Section~\ref{subsec:choice}) or to other popular balance metrics \citep[see][Figure 2]{Franklin2014}. There are additional possiblities for customization as well. Consider the situation where one felt that some covariates are more important than others. Here, the switches can be made such that a differentially weighted average of the balances across the $p$ covariates is minimized.

Also, since computation is relatively inexpensive, the greedy pair-switching algorithm can be repeated $r$ times and the minimum balance vector can be cherry picked. This further improves the order of $B$ by a factor of $1/r$.

\subsection*{Replication}

All figures and tables can be reproduced by running the \texttt{R} code found at \url{https://github.com/kapelner/GreedyExperimentalDesign/blob/master/paper_duplication.R}.

\subsection*{Acknowledgements}

We thank Elchanan Mossel and Adi Wyner for their helpful comments and Ishay Weissman for help with the proof of Proposition~\ref{prop:one}.

\bibliographystyle{apa}
\bibliography{refs}

\end{document}